\newtheorem{theorem}{Theorem}[section]
\newtheorem{proposition}[theorem]{Proposition}
\newtheorem{corollary}[theorem]{Corollary}
\newtheorem{lemma}[theorem]{Lemma}
\theoremstyle{remark}
\newtheorem{remark}[theorem]{Remark}
\theoremstyle{definition}
\newtheorem{definition}[theorem]{Definition}
\newtheorem{List}[theorem]{List}
\def \dep{\mathsf{d}}
\newcommand{\bq}{\begin{equation}}
\newcommand{\eq}{\end{equation}}
\newcommand{\beqn}{\begin{eqnarray*}}
\newcommand{\eeqn}{\end{eqnarray*}}
\newcommand{\beq}{\begin{eqnarray}}
\newcommand{\eeq}{\end{eqnarray}}
\newcommand{\rar}{\rightarrow}
\newcommand{\bc}{\begin{centre}}
\newcommand{\ec}{\end{centre}}
\newcommand{\ba}{\begin{array}}
\newcommand{\ea}{\end{array}}
\newcommand{\inp}[2]{\langle{#1},\,{#2} \rangle}
\renewcommand{\Delta}{{\nabla}}
\newcommand{\mf}{\mathfrak}
\newcommand*{\child}[1]{\mathsf{Chi}(#1)}
\newcommand*{\childn}[2]{{\mathsf{Chi}}^{\langle#1\rangle}(#2)}
\newcommand*{\Ge}{\geqslant}
\newcommand*{\lambdab}{\boldsymbol\lambda}
\newcommand*{\Le}{\leqslant}
\begin{document}
\title[Commutants and Reflexivity]
{Commutants and Reflexivity  of Multiplication tuples on Vector-valued Reproducing Kernel Hilbert Spaces}
\author{Sameer Chavan \and Shubhankar Podder \and Shailesh Trivedi
}\address{Department of Mathematics and Statistics\\
Indian Institute of Technology Kanpur, India}
   \email{chavan@iitk.ac.in}
   \email{shailtr@iitk.ac.in}
  \address{School of Mathematics, Harish-Chandra Research Institute,
Chhatnag Road \\ Jhunsi, Allahabad 211019, India}
\email{shubhankarpodder@hri.res.in}


\thanks{The research of the third author was supported by the National Post-doctoral Fellowship (Ref. No. PDF/2016/001681), SERB}
   
   \subjclass[2010]{Primary 46E22, 47A13, Secondary 46E40, 47B37}
\keywords{operator-valued reproducing kernel, multiplication tuple, commutant, reflexivity, weighted shift, directed
trees}

\date{}

\begin{abstract}
Motivated by the theory of weighted shifts on directed trees and its multivariable counterpart, we address the question of identifying commutant and reflexivity of the multiplication $d$-tuple $\mathscr M_z$ on a reproducing kernel Hilbert space $\mathscr H$ of $E$-valued holomorphic functions on $\Omega$, where $E$ is a separable Hilbert space and $\Omega$ is a bounded domain in $\mathbb C^d$ admitting bounded approximation by polynomials. 
In case $E$ is a finite dimensional cyclic subspace for $\mathscr M_z$, under some natural conditions on the $B(E)$-valued kernel associated with $\mathscr H$,  the commutant of $\mathscr M_z$ is shown to be the algebra $H^{\infty}_{_{B(E)}}(\Omega)$ of bounded holomorphic $B(E)$-valued functions on $\Omega$, provided $\mathscr M_z$ satisfies the matrix-valued von Neumann's inequality. 
This generalizes a classical result of Shields and Wallen (the case of $\dim E=1$ and $d=1$).  
 As an application, we determine the commutant of a Bergman shift on a leafless, locally finite, rooted directed tree $\mathscr T$ of finite branching index. As the second main result of this paper, we show that a multiplication $d$-tuple $\mathscr M_z$ on $\mathscr H$ satisfying the von Neumann's inequality is reflexive. 
This provides several new classes of examples as well as recovers special cases of various known results in one and several variables. 
We also exhibit a family of tri-diagonal $B(\mathbb C^2)$-valued kernels for which the associated multiplication operators $\mathscr M_z$ are non-hyponormal reflexive operators with commutants  equal to $H^{\infty}_{_{B(\mathbb C^2)}}(\mathbb D)$.
\end{abstract}

\maketitle

\section{Introduction}

This paper is motivated by some recent developments pertaining to the function theory of weighted shifts on rooted directed trees and its multivariable counterpart (refer to \cite{Jablonski, CT, MS, B-D-P, CPT, CPT-1}).
In particular, it is
centered on the investigation of two topics from classical function-theoretic operator theory, namely commutants and reflexivity of multiplication tuples on  reproducing kernel Hilbert spaces of vector-valued holomorphic functions (refer to \cite{S, RR, Co1} for a comprehensive account on commutants and reflexivity of unilateral weighted shifts and multiplication operators on reproducing kernel Hilbert spaces of scalar-valued holomorphic functions; refer to \cite{Pt, Co1, Di} for a masterful exposition on reflexivity of algebras of commuting operators). Via a construction of Shimorin \cite{Shimorin}, any bounded linear left-invertible weighted shift on a rooted directed tree can be modeled as the operator of multiplication by the coordinate function on a reproducing kernel Hilbert space of $E$-valued holomorphic functions \cite{CT}, where $E$ is a separable Hilbert space. On the other hand, a classical result of Shields and Wallen \cite[Theorem 2]{Sh-Wa} identifies the commutant of a contractive multiplication operator on a reproducing kernel Hilbert space of scalar-valued holomorphic functions on the open unit disc with the algebra of bounded holomorphic functions (see \cite[Theorem 1]{Sa}, \cite[Chapter II, Theorem 5.4]{Co}, \cite[Chapter VI, Corollary 3.7]{SF} for its variants). This provides essential motivation for vector-valued analog of the aforementioned theorem of Shields and Wallen (see \cite[Theorem 3]{Sa} for a vector-valued version of \cite[Theorem 1]{Sa}). Essential ingredients in the proof of the main result of Section 3 (Theorem \ref{commutant}) include an adaptation of the technique from \cite{Sh-Wa} to the present situation, matrix-valued version of von Neumann's inequality \cite{P}, and the role of the simultaneous boundedness of reproducing kernel and its inverse along the diagonal (cf. \cite[Theorem 5.2]{C-S}). Theorem \ref{commutant} is applicable to the so-called Bergman shifts on locally finite, rooted directed trees of finite branching index (see Proposition \ref{comm-B}). It is worth noting that the simultaneous growth of associated Bergman kernel and its inverse along the diagonal is at most of polynomial order (the reader is referred to \cite{E} and \cite{K}, where asymptotic behavior of scalar-valued kernels has been studied). 

The second main result of this paper (Theorem \ref{reflexive}) ensures reflexivity of multiplication tuple $\mathscr M_z$ on a reproducing kernel Hilbert space $\mathscr H$ of vector-valued  holomorphic functions with essentially the only assumption that $\mathscr M_z$ satisfies the von Neumann's inequality. This provides several new classes of examples and recovers special cases of various known results in one and several variables, see \cite[Theorem 3]{Sa0}, \cite[Section 10, Proposition 37]{S},  \cite[Theorem 15]{B0}, \cite{BC}, \cite[Theorem 5.2]{KP},  \cite[Theorem 4.2]{Se}, \cite[Section 0, Theorem 4]{Mc}, \cite[Theorem A]{AMu} (cf. \cite[Proposition 4.4]{CEP}, \cite[Theorem 3]{OT}, \cite[Theorem 3]{Mc}, \cite[Theorem 3.1]{KOP},  \cite[Corollary 3.7]{Es1}, \cite[Theorem 1.2]{AP}, \cite[Theorem 2.11]{Es2}, \cite[Corollary 7]{Es}).  
It is worth noting that the techniques employed in the proofs of Theorems \ref{commutant} and \ref{reflexive} have some common features (e.g. bounded approximation by polynomials in the sense of \cite{Sh-Wa} and \cite{Mc}).
We conclude the paper by exhibiting a two-parameter family of tri-diagonal matrix-valued kernels to which Theorems \ref{commutant} and \ref{reflexive} are applicable.

We set below the notations used throughout this text.
For a set $X$ and integer $d$, $\mbox{card}(X)$ denotes the cardinality of $X$ and $X^d$ stands for the $d$-fold Cartesian product of $X$.  
The symbol ${\mathbb N}$ stands for the set of nonnegative
integers, while
$\mathbb C$ denotes the field of complex numbers. 
For $\alpha =
(\alpha_1, \ldots, \alpha_d) \in {\mathbb{N}}^d,$ 
we use $\alpha!:=\prod_{j=1}^d \alpha_j!$ and $|\alpha|:=\sum_{j=1}^d
\alpha_j$.
For $w=(w_1, \ldots, w_d) \in \mathbb C^d$ and $\alpha=(\alpha_1, \ldots, \alpha_d) \in \mathbb N^d$, the complex conjugate $\overline{w} \in \mathbb C^d$ of $w$ is given by $(\overline{w}_1, \ldots, \overline{w}_d),$ while $w^\alpha$ denotes the complex number $\prod_{j=1}^d w^{\alpha_j}_j$. The symbol $\mathbb D^d$ is reserved for the open unit polydisc in $\mathbb C^d$ centered at the origin, while the open unit ball in $\mathbb C^d$ centered at the origin is denoted by $\mathbb B^d.$
In case $d=1,$ we prefer the notation $\mathbb D$ in place of $\mathbb D^1$ or $\mathbb B^1.$
Let $\mathcal H$ be a complex Hilbert space. 
If $F$ is a subset of $\mathcal H$, the closure of $F$ is denoted by $\overline{F}$, while the closed linear span of $F$ is denoted by $\bigvee \{x : x \in F\}$. In case $F$ is single-ton $\{x\}$, then $\bigvee \{x\}$  is denoted by the simpler notation $[x]$. If $\mathcal M$ is a finite dimensional subspace of $\mathcal H$, then $\dim \mathcal M$ denotes the vector space dimension of $\mathcal M.$ For a closed subspace $\mathcal M$ of $\mathcal H$,  the orthogonal projection of $\mathcal H$ onto $\mathcal M$ is denoted by $P_{\mathcal M}$.
For a positive integer $d,$ the orthogonal direct
sum of $d$ copies of $\mathcal H$ is denoted by $\mathcal H^{(d)}.$ 
Let ${B}({\mathcal H})$ denote the unital Banach algebra of
bounded linear operators on $\mathcal H.$ The multiplicative identity $I$ of $B(\mathcal H)$ is sometimes denoted by $I_{\mathcal H}$.
For a subspace $\mathcal M$ of $B(\mathcal H)$,  $\overline{\mathcal M}^{\mbox{\tiny WOT}}$ denotes the closure of $\mathcal M$ in the weak operator topology in $B(\mathcal H)$.
For clarity, the norm $\|\cdot\|$ on a normed linear space $X$ is occasionally denoted by $\|\cdot\|_{X}.$ Sometimes, this is denoted by the pair $(X, \|\cdot\|_X)$.
For  a subset $\Omega$ of $\mathbb C^d$ and a normed linear space $X,$ the sup norm of  a function $\Phi : \Omega \rar X$ is given by $\|\Phi\|_{\infty, \Omega}:=\sup_{w \in \Omega}\|\Phi(w)\|_{_{X}}.$
If $T \in B(\mathcal H)$, then $\ker(T)$ denotes the kernel of $T$, $T(\mathcal H)$ denotes the range of $T,$ $T^*$ denotes the Hilbert space adjoint of $T$, while $T^{(d)} \in B(\mathcal H^{(d)})$ denotes for the orthogonal direct sum of $d$ copies of $T$. Given $x, y \in \mathcal H,$ by the rank one operator $x \otimes y$, we understand the bounded linear operator $x \otimes y(h)=\inp{h}{y}x$, $h \in \mathcal H.$

An operator $T \in B(\mathcal H)$ is {\it left-invertible} if $T^*T$ is invertible in $B(\mathcal H)$. The {\it Cauchy dual} of a left-invertible operator $T \in B(\mathcal H)$ is given by $T':=T(T^*T)^{-1}.$ We say that $T \in B(\mathcal H)$ is
{\it analytic} if $\cap_{n \in \mathbb N}T^n(\mathcal H)=\{0\}$. 
An operator $T \in B(\mathcal H)$ is {\it irreducible} if it does not admit a proper reducing subspace.
By a  {\it commuting $d$-tuple $T=(T_1, \ldots, T_d)$ in $B(\mathcal H)$}, we mean a collection of commuting operators $T_1, \ldots, T_d$ in $B(\mathcal H).$
The notations $\sigma(T),$ $\sigma_H(T)$ and $\sigma_p(T)$ are reserved for the Taylor spectrum, Harte spectrum and joint point spectrum of a commuting $d$-tuple $T$ respectively.
The Hilbert adjoint of the commuting $d$-tuple $T=(T_1, \ldots, T_d)$ is the $d$-tuple $T^*=(T^*_1, \ldots, T^*_d),$ and
the joint kernel $\cap_{j=1}^d \ker(T_j)$ of $T$ is denoted by $\ker(T).$
 Further, for $\lambda=(\lambda_1, \ldots, \lambda_d) \in \mathbb C^d$, by $T-\lambda$, we understand the commuting $d$-tuple $(T_1-\lambda_1I_{\mathcal H}, \ldots, T_d-\lambda_dI_{\mathcal H})$. A commuting $d$-tuple $T=(T_1, \ldots, T_d)$ is said to be a {\it contraction} (resp. a {\it joint contraction}) if $T^*_j T_j \Le I$ for every $j=1, \ldots, d$ (resp. $\sum_{j=1}^d T^*_j T_j \Le I$).
The {\it commutant} $\mathscr S'$ of a subset $\mathscr S$ of $B(\mathcal H)$ is given by
\beqn
\mathscr S' := \{T \in B(\mathcal H) : ST=TS~\mbox{for all~}S \in \mathscr S\}.
\eeqn 
For a commuting $d$-tuple $T=(T_1, \ldots, T_d)$ in $B(\mathcal H)$, we use the simpler notation $\{T\}'$ for $\mathscr S',$ where $\mathscr S=\{T_1, \ldots, T_d\}.$ 
Note that $\mathscr S'$ is a unital closed subalgebra of $B(\mathcal H).$
If $C \in B(\mathcal H)$, then Lat\,$C$ denotes the set of all  closed linear subspaces of $\mathcal H$ that are invariant under $C.$ 
Let $\mathscr W$ be a subalgebra of ${B}({\mathcal H})$ containing the identity operator $I_{{\mathcal H}}$, and let Lat\,$\mathscr W$ be the set of all  closed linear subspaces of $\mathcal H$ that are invariant under every operator  $W\in \mathscr W$. The set
\begin{center}
AlgLat\,$\mathscr W=\{C\in {B}(\mathcal H): {\rm Lat}\,\mathscr W \subseteq {\rm Lat}\,C\}$
\end{center}
turns out to be a WOT-closed subalgebra of ${B}(\mathcal H),$ which contains $\mathscr W$. We say that $\mathscr W$ is \emph{reflexive} if
$\mathscr W = \mbox{AlgLat}\,\mathscr W.$
For a commuting $d$-tuple $T=(T_1, \ldots, T_d)$ in ${B}({\mathcal H})$, let $\mathscr W_{T}$ stand for the WOT-closed  subalgebra of $B(\mathcal H) $ generated by $T_1, \ldots, T_d$ and the identity operator $I_{{\mathcal H}}$: 
\beqn
\mathscr W_{T} =\overline{\{p(T) : p \in \mathbb C[z_1, \ldots, z_d]\}}^{\mbox{\tiny WOT}},
\eeqn
where $\mathbb C[z_1, \ldots, z_d]$ denotes the vector space of complex polynomials in $z_1, \ldots, z_d$ and $p(T)$ is given by the polynomial functional calculus of $T$.
A commuting $d$-tuple $T$ is {\it reflexive} if $\mathscr W_T$ is reflexive.

Here is the outline of the paper. In Section 2, we collect essential facts pertaining to the operator-valued kernels and associated reproducing kernel Hilbert spaces. Further, we formally introduce the notion of functional Hilbert space and discuss some properties of associated multiplication tuple.
Sections 3 and 4 are devoted to main results of this paper (and their immediate consequences) on commutants and reflexivity of multiplications tuples on functional Hilbert spaces respectively. In the final section, we discuss applications of the main results to the theory of weighted shifts on rooted directed trees.  Among various applications, we derive the curious fact that  the commutant of a Bergman shift on a leafless, locally finite rooted directed tree $\mathscr T$ of finite branching index is abelian if and only if $\mathscr T$ is graph isomorphic to the rooted directed tree without any branching vertex.

\section{Operator-valued Reproducing Kernels}

Before we introduce the so-called functional Hilbert spaces, we briefly recall from \cite{AM}, \cite{PR} some definitions 
and facts pertaining to Hilbert spaces associated with operator-valued kernels. 
Let $E$ be a Hilbert space and let $X$ be any set. A {\it weak $B(E)$-valued kernel} on $X$ is a function $\kappa : X \times X \rar B(E)$ such that, for any finite set $\{\lambda_1, \ldots, \lambda_n\} \subseteq X$ and any vectors $v_1, \ldots, v_n \in E,$ we have
\beqn
\sum_{i, j=1}^n \inp{\kappa(\lambda_i, \lambda_j)v_j}{v_i}_E \Ge 0.
\eeqn 
If, in addition, $\kappa(\lambda, \lambda) \neq 0$ for any $\lambda \in X$, then $\kappa$ is referred to as a {\it $B(E)$-valued kernel on $X$}. 


With any $B(E)$-valued kernel $\kappa : X \times X \rar B(E)$, one can associate a Hilbert space $\mathscr H$ of $E$-valued functions on $X$ such that for every $\lambda \in X,$
\begin{enumerate}
\item[$(\mathsf C1)$] ~the evaluation at $\lambda$ is a continuous linear function from $\mathscr H$ to $E$,
\item[$(\mathsf C2)$] ~$\{f(\lambda) : f \in \mathscr H\} \neq \{0\}$
\end{enumerate}
(see \cite[Theorem 2.60]{AM}, \cite[Theorem 6.12]{PR}).
In this case, for any $g \in E$ and $\lambda \in X,$ 
\begin{align} \label{rp}
 \left.
  \begin{minipage}{45ex}
\beqn 
& & \kappa(\cdot,\lambda)g  \in  \mathscr H, \\
& & \langle f,\kappa(\cdot,\lambda)g \rangle_{\mathscr H}  = \langle f(\lambda),g \rangle_E, \quad f \in \mathscr H
  \eeqn
 \end{minipage}
   \right\} 
\end{align}
(refer to \cite[Remark 2.65]{AM} for details).
Conversely, any Hilbert space $\mathscr H$  of $E$-valued functions on a set $X$ satisfying conditions $(\mathsf C1)$ and $(\mathsf C2)$ can be shown to be a reproducing kernel Hilbert space associated with a weak $B(E)$-valued kernel $\kappa_{\mathscr H}$ on $X$ (see \cite[Theorem 2.60]{AM}). 
However, if $\mathscr H$ contains $E,$ then
one can ensure that
any weak $B(E)$-valued kernel is indeed a $B(E)$-valued kernel. Indeed, if $\kappa(\lambda, \lambda)g =0$ for some $\lambda \in \Omega$ and $g \in E,$ then
\beq
\label{+ve}
\|\kappa(\cdot,\lambda)g\|^2 = \langle \kappa(\cdot,\lambda)g, \kappa(\cdot,\lambda)g \rangle_{\mathscr H}  \overset{\eqref{rp}}= 
\langle \kappa(\lambda,\lambda)g, g \rangle_{E},
\eeq
which implies that $\kappa(\cdot,\lambda)g=0$, and hence by \eqref{rp} applied to the constant function $f=g$, we get $g=0.$ 


A bounded open connected subset $\Omega$ of $\mathbb C^d$ is said to be an {\it admissible domain} if it has the following property: For any bounded holomorphic function $\phi : \Omega \rar \mathbb C$,
there exists a sequence $\{p_{n}\}_{n=1}^\infty$ of polynomials such that 
\begin{enumerate}
\item[$\bullet$] for some $M > 0,$ $\|p_{n}\|_{\infty, \Omega}  \Le  M \|\phi\|_{\infty, \Omega}$ for every integer $n \Ge 1$,
\item[$\bullet$] $p_{n}(w)$ converges to $\phi(w)$ as $n \rar \infty$ for every $w \in \Omega$.  
\end{enumerate}
It is well-known that if a bounded domain $\Omega$ has polynomially convex closure in $\mathbb C^d$, then $\Omega$ is admissible provided it is star-shaped or strictly pseudoconvex with $\mathcal C^{2}$ boundary (see, for instance, \cite[Proof of Theorem 4]{Mc}, \cite[Lemma 2.2]{AP}). 
In what follows, we also need the notion of  vector-valued holomorphic function $f : \Omega \rar Z$, where $\Omega$ is a domain in $\mathbb C^d$ and $Z$ is a normed linear space. Recall that $f$ is {\it holomorphic} if $ \phi \circ f$ is holomorphic for every bounded linear functional $\phi$ on $Z$.

Although the following is not standard, we find it convenient for our purpose.
\begin{definition}
 A 
{\it functional Hilbert space} is the quadruple $(\mathscr H, \kappa_{\mathscr H}, \Omega, E)$, where $\Omega$ is an admissible domain in $\mathbb C^d$, $E$ is a separable Hilbert space and $\mathscr H$ is a reproducing kernel Hilbert space of holomorphic functions $f : \Omega \rar E$ associated with the $B(E)$-valued kernel $\kappa_{\mathscr H} : \Omega \times \Omega \rar B(E)$ satisfying the following: 
\begin{itemize}
\item[$\diamond$]($z$-invariance) for any $f \in \mathscr H,$ the function $z_jf : \Omega \rar E$ given by $$(z_jf)(w)=w_jf(w),\quad w=(w_1, \ldots, w_d) \in \Omega$$ belongs to $\mathscr H$ for every $j=1, \ldots, d,$
\item[$\diamond$](Density of polynomials) the space of $E$-valued polynomials in $z_1, \ldots, z_d$ forms a dense subspace of $\mathscr H:$
$\bigvee \{z^\alpha g : \alpha \in \mathbb N^d, ~g \in E\} = \mathscr H.$
\end{itemize}
\end{definition}
\begin{remark} \label{rmk-poly}
Suppose that $\Omega$ contains  the origin $0$.
Then the condition 
\beq \label{nc}
\kappa(\lambda,0)=I_E, \quad \lambda \in \Omega,
\eeq
together with the reproducing property  
implies that $\mathscr H$ contains the space $\mathscr E$ of all $E$-valued constant functions. This fact combined with the $z$-invariance of $\mathscr H$ implies that indeed $\mathscr H$ contains the subspace $\mathscr P$ of all $E$-valued polynomials in $z_1, \ldots, z_d.$ 
Further,
the condition \eqref{rp}
allows to rephrase the normalization condition as 
\beqn \langle f - f(0), g \rangle_{\mathscr H}=0, \quad g \in E, ~f \in \mathscr H. \eeqn
\end{remark}

Let $(\mathscr H, \kappa_{\mathscr H}, \Omega, E)$ be a functional Hilbert space.
A function $\Phi : \Omega \rar B(E)$ is said to be a {\it multiplier} of $\mathscr H$ if 
$\Phi$ is holomorphic and 
\beqn
\Phi f \in \mathscr H ~\mbox{whenever~}f \in \mathscr H,
\eeqn
where $(\Phi f)(w) = \Phi(w)f(w)$ for $w \in \Omega$. 
Any multiplier $\Phi$ induces the bounded linear operator $\mathscr M_{\Phi} : \mathscr H \rar \mathscr H$ given by
\beqn
\mathscr M_{\Phi}f = \Phi f, \quad f \in \mathscr H.
\eeqn
Indeed, in view of the closed graph theorem, this is immediate from 
\beqn
\inp{\Phi f}{\kappa_{\mathscr H}(\cdot, w)h}_{\mathscr H} \overset{\eqref{rp}}= \inp{\Phi(w)f(w)}{h}_E, \quad f \in \mathscr H, ~w \in \Omega, ~h \in E.
\eeqn
By the {\it multiplier norm of $\Phi$}, we understand the operator norm of $\mathscr M_{\Phi}$.
We say that $\Phi : \Omega \rar B(E)$ is {\it bounded} if $\|\Phi\|_{\infty, \Omega} < \infty.$
In this paper, we will be interested in the algebra 
\beqn H^{\infty}_{_{B(E)}}(\Omega) := \Big\{\Phi : \Omega \rar B(E)  ~|~ \Phi ~\mbox{is a bounded holomorphic function}\Big\}.\eeqn
It can be easily deduced from Weierstrass convergence theorem \cite[Chapter I, Theorem 1.9]{R} that $H^{\infty}_{_{B(E)}}(\Omega)$ is a Banach algebra endowed with the sup norm $\|\cdot\|_{\infty, \Omega}$. 
In case $E=\mathbb C$, we use the simpler and standard notation $H^{\infty}(\Omega)$ for $H^{\infty}_{_{B(E)}}(\Omega).$
\begin{remark}
By the definition of the functional Hilbert space, $z_jI_E : \Omega \rar B(E)$ given by 
\beqn
(z_jI_E)(w)=w_jI_E, \quad w = (w_1, \ldots, w_d) \in \Omega,
\eeqn
is a multiplier of $\mathscr H$ for $j=1, \ldots, d.$ In particular, $\mathscr M_{z_jI_E}$ defines a bounded linear operator on $\mathscr H.$ We find it convenient to denote $\mathscr M_{z_jI_E}$ by $\mathscr M_{z_j}.$ Note that the $d$-tuple $\mathscr M_z=(\mathscr M_{z_1}, \ldots, \mathscr M_{z_d})$ is a commuting $d$-tuple in $B(\mathscr H)$.
\end{remark}

We collect below several elementary properties of functional Hilbert spaces and associated multiplication operators.
\begin{proposition} 
\label{eigen}
Let $(\mathscr H, \kappa_{\mathscr H}, \Omega, E)$ be a functional Hilbert space and let $\mathscr M_z$ denote the commuting $d$-tuple of multiplication operators $\mathscr M_{z_1}, \ldots, \mathscr M_{z_d}$ in $B(\mathscr H)$. Then, we have
\begin{enumerate}
\item[(i)] $\bigvee \{\kappa_{\mathscr H} (\cdot,w)g : w \in \Omega, g \in E\}=\mathscr H,$ 
\item[(ii)] $
\kappa_{\mathscr H}(\cdot,w)(g+h)=\kappa_{\mathscr H}(\cdot,w)g + \kappa_{\mathscr H}(\cdot,w)h$ for every $w \in \Omega$ and $g, h \in E,$
\item[(iii)] for any linearly independent subset $G$ of $E$,  $\{\kappa_{\mathscr H}(\cdot, w)g : g \in G\}$ is linearly independent in $\mathscr H$ for every $w \in \Omega$,
\item[(iv)] 
$\kappa_{\mathscr H}(\cdot, 
w)E \subseteq \ker(\mathscr M^*_z - \overline{w})$ for every $w \in \Omega.$
\end{enumerate}
In addition, if $E$ is finite dimensional, then
for any $w \in \Omega,$ we have
\begin{enumerate}
\item[(v)] $\kappa_{\mathscr H} (w,w) \in B(E)$ is invertible such that
$\|\kappa_{\mathscr H} (w,w)^{-1}\| \|\kappa_{\mathscr H} (w,w)\| \Ge 1$, 
\item[(vi)] $\|\kappa_{\mathscr H} (w,w)^{-1}\| \|\kappa_{\mathscr H} (w,w)\| =1$  if and only if $\kappa_{\mathscr H} (w,w)=\mu(w) I_{E}$ for some scalar $\mu(w) >0$,
\item[(vii)] 
$\ker(\mathscr M^*_z - \overline{w})=\kappa_{\mathscr H}(\cdot, 
w)E$, 
\item[(viii)] $\dim \kappa_{\mathscr H}(\cdot, 
w)E= \dim E.$
\end{enumerate}
\end{proposition}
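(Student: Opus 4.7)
The plan is to treat the eight items in order, using the reproducing property \eqref{rp}, the density-of-polynomials axiom (which in particular forces every $E$-valued constant to lie in $\mathscr H$), and finite dimensionality of $E$ only where needed.

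For (i), I would show $\{\kappa_{\mathscr H}(\cdot,w)g : w\in\Omega, g\in E\}^{\perp}=\{0\}$: if $f\perp\kappa_{\mathscr H}(\cdot,w)g$ for all $w,g$, then \eqref{rp} yields $\langle f(w),g\rangle_E=0$ for every $w\in\Omega$ and $g\in E$, hence $f\equiv0$. For (ii), compute $\langle f,\kappa_{\mathscr H}(\cdot,w)(g+h)\rangle_{\mathscr H}=\langle f(w),g+h\rangle_E$ for every $f\in\mathscr H$ and appeal to the uniqueness clause in Riesz. For (iv), a one-line adjoint computation gives
\[
\langle f,\mathscr M_{z_j}^*\kappa_{\mathscr H}(\cdot,w)g\rangle_{\mathscr H}=\langle w_jf(w),g\rangle_E=\overline{w_j}\,\langle f,\kappa_{\mathscr H}(\cdot,w)g\rangle_{\mathscr H},
\]
so $\mathscr M_{z_j}^{*}\kappa_{\mathscr H}(\cdot,w)g=\overline{w_j}\,\kappa_{\mathscr H}(\cdot,w)g$.

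Item (iii) is where I first use the density axiom: if $\sum_ic_i\kappa_{\mathscr H}(\cdot,w)g_i=0$, part (ii) lets me rewrite this as $\kappa_{\mathscr H}(\cdot,w)u=0$ with $u=\sum_ic_ig_i\in E$; testing against the constant function $u$ (which lies in $\mathscr H$ by the density clause) and using \eqref{rp} yields $\|u\|_E^{2}=0$, so $u=0$ and linear independence of $G$ forces $c_i=0$. Item (viii) is then immediate by applying (iii) to a basis of $E$.

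For (v), finite-dimensionality lets me test invertibility on the kernel: if $\kappa_{\mathscr H}(w,w)g=0$, then \eqref{+ve} gives $\kappa_{\mathscr H}(\cdot,w)g=0$ and pairing with the constant function $g$ (via \eqref{rp}) gives $\|g\|_E^{2}=0$; hence $\kappa_{\mathscr H}(w,w)$ is injective on a finite-dimensional space and so invertible. The inequality $\|\kappa_{\mathscr H}(w,w)^{-1}\|\,\|\kappa_{\mathscr H}(w,w)\|\Ge1$ is the submultiplicativity statement $\|I_E\|\Le\|T\|\|T^{-1}\|$. For (vi), $\kappa_{\mathscr H}(w,w)$ is positive (self-adjoint with nonnegative spectrum) and invertible, so in an orthonormal eigenbasis its norm and the norm of its inverse are $\lambda_{\max}$ and $\lambda_{\min}^{-1}$ respectively; the product is $1$ precisely when $\lambda_{\max}=\lambda_{\min}$, i.e.\ when the operator is a positive scalar multiple of $I_E$.

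The main obstacle is the nontrivial direction of (vii), i.e.\ $\ker(\mathscr M_z^{*}-\overline w)\subseteq\kappa_{\mathscr H}(\cdot,w)E$. Given $f$ in this joint eigenspace, I define the conjugate-linear functional $h\mapsto\langle f,h\rangle_{\mathscr H}$ on $E$ (where $h$ is viewed as the constant function, which lies in $\mathscr H$); by Riesz there is $c\in E$ with $\langle f,h\rangle_{\mathscr H}=\langle c,h\rangle_E$ for every $h\in E$. For an arbitrary $E$-valued polynomial $p\cdot h$, the eigenvalue equation gives
\[
\langle f,p\cdot h\rangle_{\mathscr H}=\langle p(\mathscr M_z)^{*}f,h\rangle_{\mathscr H}=\overline{p(w)}\,\langle c,h\rangle_E,
\]
while the reproducing property produces the identical value for $\langle\kappa_{\mathscr H}(\cdot,w)c,p\cdot h\rangle_{\mathscr H}$. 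Since $E$-valued polynomials are dense in $\mathscr H$, this forces $f=\kappa_{\mathscr H}(\cdot,w)c$, completing (vii) and the proposition.
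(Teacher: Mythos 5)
Your proof is correct and, where the paper argues in detail (parts (i), (ii), (iii), (v), (vi)), it follows essentially the same route: orthogonality plus the reproducing property for (i), additivity of the inner product for (ii), testing $\kappa_{\mathscr H}(\cdot,w)u$ against the constant function $u$ for (iii), and the spectral decomposition of the positive invertible operator $\kappa_{\mathscr H}(w,w)$ for (v) and (vi). For (iv), (vii) and (viii) the paper does not give a proof at all but refers to \cite[Corollaries 4.1.11 and 4.2.11]{CPT}; your Riesz-representation argument for the inclusion $\ker(\mathscr M_z^*-\overline w)\subseteq\kappa_{\mathscr H}(\cdot,w)E$ — producing $c\in E$ with $\langle f,h\rangle_{\mathscr H}=\langle c,h\rangle_E$ and matching $f$ against $\kappa_{\mathscr H}(\cdot,w)c$ on the dense set of $E$-valued polynomials — is a clean, self-contained replacement for that citation and is exactly the kind of argument the reference contains. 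One small slip: in (iv) the intermediate equality should read $\langle w_jf(w),g\rangle_E=w_j\langle f,\kappa_{\mathscr H}(\cdot,w)g\rangle_{\mathscr H}=\langle f,\overline{w_j}\,\kappa_{\mathscr H}(\cdot,w)g\rangle_{\mathscr H}$; the conjugate appears only after the scalar is moved into the second slot, so your displayed conclusion $\mathscr M_{z_j}^*\kappa_{\mathscr H}(\cdot,w)g=\overline{w_j}\,\kappa_{\mathscr H}(\cdot,w)g$ is nevertheless correct.
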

\begin{proof}
Let $w \in \Omega.$ The part (i) follows from the fact that any $f \in \mathscr H$ orthogonal to $\kappa_{\mathscr H}(\cdot, w)E$ satisfies $f(w)=0$ in view of \eqref{rp}.
Part (ii) follows from \eqref{rp} and additivity of the inner-product. To see (iii), in view of (ii), we may suppose that 
$\kappa_{\mathscr H} (\cdot,w)g = 0$ for some $g \in E$. Thus $\kappa_{\mathscr H}(w, w)g=0$, and hence by the injectivity of $\kappa_{\mathscr H}(w, w)$ (see the discussion following \eqref{rp}), $g=0$. This completes the verification of (iii). 

Assume that $E$ is finite dimensional, and let $w \in \Omega.$
Since $\kappa_{\mathscr H} (w,w)$ is injective  and
$E$ is finite dimensional, $\kappa_{\mathscr H} (w,w) \in B(E)$ is invertible.
 The remaining part in (v) is now obvious.
 To see (vi),
note that by \eqref{+ve}, $\kappa_{\mathscr H} (w,w)$ is a positive operator. 
By the spectral theorem, $\kappa_{\mathscr H} (w,w)$ is unitarily equivalent to a diagonal matrix with positive diagonal entries, say, $\mu_j(w)$, $j = 1, \ldots, \dim E$. Further, 
\beqn
\|\kappa_{\mathscr H} (w,w)\| = \max_{1 \Le j \Le \dim E} \mu_j(w), \quad  \|\kappa_{\mathscr H} (w,w)^{-1}\| =\Big( {\displaystyle\min_{1 \Le j \Le \dim E} \mu_j(w)}\Big)^{-1}. 
\eeqn
It follows that $\|\kappa_{\mathscr H} (w,w)^{-1}\| \|\kappa_{\mathscr H} (w,w)\| =1$ if and only if  
\beqn
 {\displaystyle \max_{1 \Le j \Le \dim E} \mu_j(w)}={\displaystyle\min_{1 \Le j \Le \dim E} \mu_j(w)},
\eeqn
which is possible if and only if $\mu_1(w) = \cdots = \mu_{\dim E}(w)$. In this case, $\kappa_{\mathscr H} (w,w)$ must be a scalar multiple of $I_E$. This completes the verification of (vi). 

The facts (iv), (vii) and (viii) may be deduced from \eqref{rp} and the density of $E$-valued polynomials in $z_1, \ldots, z_d$. Indeed, these parts have been noted implicitly in the proof of \cite[Corollaries 4.1.11 and 4.2.11]{CPT}. 
\end{proof}
\begin{remark}
Note that $\kappa_{\mathscr H}(\cdot, w)g$ is precisely the value of the adjoint of  the evaluation map $E_w : \mathscr H \rar E$ evaluated at $g$ as discussed in \cite{PR}, \cite{C-S}.
\end{remark}



\begin{corollary}
Let $(\mathscr H, \kappa_{\mathscr H}, \Omega, E)$ be a functional Hilbert space and let $\mathscr M_z$ denote the commuting $d$-tuple of multiplication operators $\mathscr M_{z_1}, \ldots, \mathscr M_{z_d}$ in $B(\mathscr H)$.
Then the Harte spectrum $\sigma_H(\mathscr M_z)$ of $\mathscr M_z$ contains the closure of $\Omega.$ 
\end{corollary}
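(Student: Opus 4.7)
The plan is to reduce to showing $\Omega\subseteq \sigma_H(\mathscr M_z)$ and then use closedness of the Harte spectrum. Recall that by definition $w\notin \sigma_H(\mathscr M_z)$ if and only if $\mathscr M_z-w$ is both left joint invertible and right joint invertible; in particular, if $w\notin \sigma_H(\mathscr M_z)$, then there exist $S_1,\ldots,S_d\in B(\mathscr H)$ with $\sum_{j=1}^d(\mathscr M_{z_j}-w_j)S_j = I_{\mathscr H}$.

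First, I would fix $w\in\Omega$ and produce a nonzero vector in $\ker(\mathscr M_z^*-\overline{w})$. By Proposition \ref{eigen}(iv), we have the inclusion $\kappa_{\mathscr H}(\cdot,w)E\subseteq \ker(\mathscr M_z^*-\overline{w})$, so it suffices to note that this subspace is nontrivial. Since $\kappa_{\mathscr H}$ is a $B(E)$-valued kernel, $\kappa_{\mathscr H}(w,w)\neq 0$, so there exists $g\in E$ with $\kappa_{\mathscr H}(w,w)g\neq 0$; the function $\kappa_{\mathscr H}(\cdot,w)g\in\mathscr H$ cannot then be identically zero (its value at $w$ is $\kappa_{\mathscr H}(w,w)g$, which is nonzero), so $\kappa_{\mathscr H}(\cdot,w)g$ is a nonzero element of $\ker(\mathscr M_z^*-\overline{w})$.

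Next, I would rule out right joint invertibility of $\mathscr M_z-w$. Suppose toward a contradiction that $\sum_{j=1}^d(\mathscr M_{z_j}-w_j)S_j=I_{\mathscr H}$ for some $S_j\in B(\mathscr H)$. Taking adjoints yields $\sum_{j=1}^d S_j^*(\mathscr M_{z_j}^*-\overline{w}_j)=I_{\mathscr H}$. Applying both sides to the vector $f:=\kappa_{\mathscr H}(\cdot,w)g$ from the previous step gives $f=0$, contradicting $f\neq 0$. Hence $\mathscr M_z-w$ is not right joint invertible, and so $w\in\sigma_H(\mathscr M_z)$.

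This establishes $\Omega\subseteq \sigma_H(\mathscr M_z)$. Since the Harte spectrum of any commuting tuple in $B(\mathscr H)$ is a closed subset of $\mathbb C^d$, passing to closures yields $\overline{\Omega}\subseteq \sigma_H(\mathscr M_z)$, as required. There is no serious obstacle; the only substantive point is identifying the joint eigenvectors $\kappa_{\mathscr H}(\cdot,w)g$ of $\mathscr M_z^*$ with eigenvalue $\overline{w}$, which is already supplied by Proposition \ref{eigen}(iv), and translating this joint point spectrum information for $\mathscr M_z^*$ into a statement about the Harte spectrum of $\mathscr M_z$ via the adjoint identity above.
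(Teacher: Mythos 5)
Your proof is correct and follows essentially the same route as the paper: both arguments rest on Proposition \ref{eigen}(iv) supplying nonzero joint eigenvectors $\kappa_{\mathscr H}(\cdot,w)g$ of $\mathscr M_z^*$ at $\overline{w}$ for each $w\in\Omega$, together with the closedness of the Harte spectrum. The only difference is that where the paper cites the general facts $\sigma_p(\mathscr M_z^*)\subseteq\sigma_H(\mathscr M_z^*)$ and $\sigma_H(\mathscr M_z^*)=\{w : \overline{w}\in\sigma_H(\mathscr M_z)\}$ from \cite{Cu}, you verify the relevant instance directly by taking adjoints of a putative joint right inverse, which is a perfectly sound (and self-contained) substitute.
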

\begin{proof}
By Proposition \ref{eigen}(iv), $\{w \in \mathbb C^d : \overline{w} \in \Omega\}$ is contained in the joint point spectrum $\sigma_p(\mathscr M^*_z)$ of $\mathscr M^*_z.$ However, by the general theory \cite{Cu},
\beqn
\sigma_p(\mathscr M^*_z) \subseteq \sigma_H(\mathscr M^*_z) =\{w \in \mathbb C^d : \overline{w} \in \sigma_H(\mathscr M_z)\}.
\eeqn
It follows that ${\Omega} \subseteq \sigma_H(\mathscr M_z).$ The desired conclusion now follows from the fact that the Harte spectrum is closed \cite{Cu}.
\end{proof}
\begin{remark} \label{dominating}
Note that the Harte spectrum $\sigma_H(\mathscr M_z)$ of $\mathscr M_z$ is {\it dominating for the algebra $H^{\infty}(\Omega)$}  in the following sense: \beqn \|f\|_{\infty, \Omega} = \|f\|_{\infty, \sigma_H(\mathscr M_z)\, \cap\, \Omega}, \quad f \in H^{\infty}(\Omega).\eeqn
The condition that a spectral system (e.g. Harte spectrum, essential Taylor spectrum, essential Harte spectrum) is dominating for the algebra of bounded holomorphic functions appears in a variety of results on the invariant subspaces or reflexivity of tuples (see \cite[Corollary 3.7]{Es1}, \cite[Theorem 10.2.2]{Pt}, \cite[Theorem 4.2]{Es4}). 
\end{remark}

We find it convenient to introduce the following terminologies, which resemble with that of {\it von Neumann $d$-tuple} (refer to \cite{Co}, \cite{Di}).
\begin{definition}
Let $(\mathscr H, \kappa_{\mathscr H}, \Omega, E)$ be a functional Hilbert space and let $\mathscr M_z$ denote the commuting $d$-tuple of multiplication operators $\mathscr M_{z_1}, \ldots, \mathscr M_{z_d}$ in $B(\mathscr H)$. We say that {\it $\mathscr M_z$ satisfies von Neumann's inequality} if there exists a constant $K >0$ such that
\beqn
\|p(\mathscr M_z)\|_{B(\mathscr H)} \Le K\, \|p\|_{\infty, \Omega}, \quad p \in \mathbb C[z_1, \ldots, z_d].
\eeqn
We say that {\it $\mathscr M_z$ satisfies the matrix-valued von Neumann's inequality} if there exists a constant $K >0$ such that
\beqn
\|(p_{i, j}(\mathscr M_z))_{_{1 \Le i, j \Le m}}\|_{B(\mathscr H^{(m)})} \Le K\, \|(p_{i, j})_{_{1 \Le i, j \Le m}}\|_{\infty, \Omega}, \quad p_{i, j} \in \mathbb C[z_1, \ldots, z_d], ~m \in \mathbb N.
\eeqn
\end{definition}
\begin{remark} \label{rmk-pb}
Note that the multiplication tuple $\mathscr M_z$ satisfies von Neumann's inequality (resp. matrix-valued von Neumann's inequality) if the polynomial functional calculus $\Phi(p)=p(\mathscr M_z)~(p \in \mathbb C[z_1, \ldots, z_d])$ is {\it bounded} (resp. {\it  completely bounded}) in the sense of \cite{P} and \cite{Pi}.  
\end{remark}

We record the following known fact for ready reference 
(see \cite[Corollary 7.7]{P} or \cite[Theorem 1.2.2]{Ar} and the remark following it).
\begin{lemma} \label{dilation-von}
Let $\Omega$ be a bounded domain in $\mathbb C^d$ and let $T$ be a commuting $d$-tuple in $B(\mathcal H)$. Suppose there exists a commuting $d$-tuple $N$ of normal operators in $B(\mathcal K)$ for some Hilbert space $\mathcal K$ containing  $\mathcal H$ such that 
\beqn \sigma(N) \subseteq \overline \Omega ~\mbox{and~}
p(T)=P_{\mathcal H}p(N)|_{\mathcal H}, \quad p \in \mathbb C[z_1, \ldots, z_d].
\eeqn
Then 
$T$ satisfies 
\beqn
\|(p_{i, j}(T))_{_{1 \Le i, j \Le m}}\|_{B(\mathscr H^{(m)})} \Le  \|(p_{i, j})_{_{1 \Le i, j \Le m}}\|_{\infty, \Omega}, \quad p_{i, j} \in \mathbb C[z_1, \ldots, z_d], ~m \in \mathbb N.
\eeqn
\end{lemma}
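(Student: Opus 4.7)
The plan is to reduce the matrix-valued von Neumann inequality for $T$ to the analogous inequality for the normal dilation $N$, for which it follows from the continuous functional calculus for commuting normal tuples. First, I would invoke the spectral theorem for commuting normal $d$-tuples to produce a unital $*$-homomorphism $\pi \colon C(\sigma(N)) \rar B(\mathcal K)$ sending the coordinate functions $z_1, \ldots, z_d$ to $N_1, \ldots, N_d$ respectively; in particular, $\pi(p) = p(N)$ for every $p \in \mathbb C[z_1, \ldots, z_d]$.

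Next, I would appeal to the standard fact that a unital $*$-homomorphism between $C^*$-algebras is completely contractive. Applied to $\pi$, this yields, for any $m \in \mathbb N$ and $p_{i,j} \in \mathbb C[z_1, \ldots, z_d]$,
\beqn
\|(p_{i,j}(N))_{_{1 \Le i, j \Le m}}\|_{B(\mathcal K^{(m)})} = \|(\pi(p_{i,j}))_{_{1 \Le i, j \Le m}}\|_{B(\mathcal K^{(m)})} \Le \|(p_{i,j})_{_{1 \Le i, j \Le m}}\|_{\infty, \sigma(N)},
\eeqn
and the hypothesis $\sigma(N) \subseteq \overline \Omega$ combined with continuity of polynomials on $\overline \Omega$ further bounds this by $\|(p_{i,j})_{_{1 \Le i, j \Le m}}\|_{\infty, \Omega}$.

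Finally, I would lift the dilation identity $p(T) = P_{\mathcal H} p(N)|_{\mathcal H}$ to block operators. Since $P_{\mathcal H^{(m)}}$ coincides with $P_{\mathcal H}^{(m)}$ (the orthogonal direct sum of $m$ copies acts entrywise), compressing the block operator $(p_{i,j}(N))$ entry-by-entry gives
\beqn
(p_{i,j}(T))_{_{1 \Le i, j \Le m}} = P_{\mathcal H^{(m)}} (p_{i,j}(N))_{_{1 \Le i, j \Le m}}|_{\mathcal H^{(m)}}.
\eeqn
Because compression does not increase operator norm, combining this with the previous step completes the argument.

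The only mild subtlety is the justification that the polynomial functional calculus for the commuting normal tuple $N$ agrees with the one supplied by $\pi$; this is a routine consequence of uniqueness of the continuous functional calculus for commuting normal tuples, so I do not expect any serious obstacle. Indeed, the result is essentially a repackaging of Arveson's dilation-to-complete-contraction principle and could also be cited verbatim from \cite[Corollary 7.7]{P} or \cite[Theorem 1.2.2]{Ar}.
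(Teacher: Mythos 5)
Your argument is correct and is essentially the proof the paper relies on: the paper states this lemma without proof, citing \cite[Corollary 7.7]{P} and \cite[Theorem 1.2.2]{Ar}, and your route (complete contractivity of the unital $*$-homomorphism given by the spectral theorem for the commuting normal tuple, followed by entrywise compression to $\mathcal H^{(m)}$, which does not increase the norm) is precisely the standard argument behind those cited results. The auxiliary points you flag — that $\sup_{\Omega}$ equals $\sup_{\overline\Omega}$ by continuity and that the polynomial calculus for $N$ agrees with $\pi$ restricted to polynomials — are routine and handled correctly.
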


\begin{List} \label{list}
We list here some known cases \cite[Chapter I, Theorems 4.1]{SF}, \cite[Pg 88]{Ando}, \cite[Theorem 1.1]{Hz} \cite[Pg 987]{MV}, \cite[Proposition 2]{AL} in which the pair $(\Omega, T)$ satisfies the hypothesis of 
Lemma \ref{dilation-von}:
\begin{enumerate}
\item[$\bullet$] $\Omega=\mathbb D$, $T$ is any contraction,
 \item[$\bullet$] $\Omega=\mathbb D^2$, $T$ is a  contractive $2$-tuple,
\item[$\bullet$] $\Omega=\mathbb D^d$, $T$ is a contractive $d$-variable  weighted shift with positive weights,
 \item[$\bullet$] $\Omega=\mathbb D^d$, $T$ is a joint contractive $d$-tuple.
 \item[$\bullet$] $\Omega=\mathbb B^d$, $T$ is a commuting $d$-tuple such that
 \beqn
 \sum_{j=0}^k (-1)^j {k \choose j} \sum_{\underset{\tiny{|\alpha|=j}}{\alpha \in \mathbb N^d}} \frac{j!}{\alpha!} T^{*\alpha}T^{\alpha} \Ge 0, \quad k=1, \ldots, d. 
 \eeqn
\end{enumerate}
\end{List}
Under some natural assumptions on the $B(E)$-valued kernels $\kappa_{\mathscr H}$ (see \eqref{assumption} and \eqref{g-rate}), the commutants of multiplication tuples $T=\mathscr M_z$ in $B(\mathscr H)$, falling in any one of the classes mentioned in List \ref{list}, can be identified with the algebra $H^{\infty}_{_{B(E)}}(\Omega)$ of $B(E)$-valued bounded holomorphic functions on $\Omega$ (see Corollary \ref{coro-list}). We will also show that $\mathscr M_z$ is reflexive in all the above cases (see Corollary \ref{coro-list2}).

\section{Commutants}

The first main result of this paper identifies commutants of multiplication tuples $\mathscr M_z$ on certain functional Hilbert spaces. A special case of this result (under the additional assumption that the joint kernel of $\mathscr M^*_z-\overline{\lambda}$ is $1$-dimensional for every $\lambda \in \Omega$) has been essentially obtained in \cite[Theorem 5.2]{C-S}.
\begin{theorem} \label{commutant}
Let $(\mathscr H, \kappa_{\mathscr H}, \Omega, E)$ be a functional Hilbert space with finite dimensional $E$ and let $\mathscr M_z$ denote the commuting $d$-tuple of multiplication operators $\mathscr M_{z_1}, \ldots, \mathscr M_{z_d}$ in $B(\mathscr H)$.
Suppose that the reproducing kernel $\kappa_{\mathscr H}$ satisfies 
the following conditions:
\beq \label{assumption}
\inp{f(\cdot)}{g}_{_E} h \in \mathscr H ~\mbox{for every}~g, h \in E~\mbox{and~}f \in \mathscr H, 
\eeq
\beq \label{g-rate}
\sup_{w \in \Omega} \displaystyle \|\kappa_{\mathscr H}(w, w)\|_{_{B(E)}} \|\kappa_{\mathscr H}(w, w)^{-1}\|_{_{B(E)}} < \infty
\eeq
$($see Proposition \ref{eigen}$($v$))$.
Then the 
commutant $\{{\mathscr M}_z\}'$ of ${\mathscr M}_z$ is isometrically isomorphic to the Banach algebra $$\mathscr R:=\{\Phi \in H^{\infty}_{_{B(E)}}(\Omega) : \mathscr M_{\Phi} \in \{\mathscr M_z\}'\}$$ 
endowed with the multiplier norm $\|\cdot\|_{B(\mathscr H)}$.
In addition, if the multiplication $d$-tuple $\mathscr M_z$ satisfies the matrix-valued von Neumann's inequality, then $\{{\mathscr M}_z\}'$ is isometrically isomorphic to the Banach algebra $(H^{\infty}_{_{B(E)}}(\Omega), ~\|\cdot\|_{B(\mathscr H)})$.
\end{theorem}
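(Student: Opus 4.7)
My plan is to split the argument in two. First, I will use a Shields--Wallen type trick to identify $\{\mathscr M_z\}'$ with the subalgebra $\mathscr R$ of multipliers coming from bounded holomorphic $B(E)$-valued functions. Second, under the matrix-valued von Neumann hypothesis, I will show that $\mathscr R$ exhausts $H^\infty_{B(E)}(\Omega)$ by a bounded polynomial approximation combined with a WOT compactness pass.

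For the first part, fix $T \in \{\mathscr M_z\}'$. Then $T^*$ commutes with each $\mathscr M_{z_j}^*$ and so preserves every joint eigenspace $\ker(\mathscr M_z^* - \overline{w}) = \kappa_{\mathscr H}(\cdot, w) E$ (Proposition \ref{eigen}(vii)). Since $g \mapsto \kappa_{\mathscr H}(\cdot, w) g$ is a linear bijection from $E$ onto this eigenspace, I would define $A(w) \in B(E)$ by $T^* \kappa_{\mathscr H}(\cdot, w) g = \kappa_{\mathscr H}(\cdot, w) A(w) g$ and put $\Phi(w) := A(w)^*$. Pairing $Tf$ with $\kappa_{\mathscr H}(\cdot, w) g$ and applying \eqref{rp} yields $(Tf)(w) = \Phi(w) f(w)$, hence $T = \mathscr M_\Phi$. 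Holomorphicity of $\Phi$ comes from specialising the identity to constant functions $g \in E \subseteq \mathscr H$ (which lie in $\mathscr H$ by density of polynomials): $\Phi(\cdot) g = T g \in \mathscr H$ is a holomorphic $E$-valued function for each $g$, and finite-dimensionality of $E$ promotes this to holomorphicity of $\Phi$ into $B(E)$. Boundedness of $\Phi$ follows by sandwiching $\|\kappa_{\mathscr H}(\cdot, w) A(w) g\|^2 = \langle \kappa_{\mathscr H}(w,w) A(w) g, A(w) g\rangle_E$ between the bounds $\|\kappa_{\mathscr H}(w,w)^{-1}\|^{-1} \|A(w) g\|^2$ from below and $\|T\|^2 \|\kappa_{\mathscr H}(w,w)\| \|g\|^2$ from above; condition \eqref{g-rate} then gives $\|\Phi\|_{\infty, \Omega} \leq \sqrt{C}\, \|T\|$. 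Thus $\Phi \in \mathscr R$ and $\Phi \mapsto \mathscr M_\Phi$ is the desired isometric algebra isomorphism, injectivity and multiplicativity being immediate.

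For the addendum, I must show every $\Phi \in H^\infty_{B(E)}(\Omega)$ is a multiplier. Fix an orthonormal basis $e_1, \ldots, e_m$ of $E$ and write $\Phi = (\phi_{ij})$. Entrywise admissibility of $\Omega$ furnishes polynomials $p_{n,ij}$ with $\|p_{n,ij}\|_{\infty, \Omega} \leq M \|\Phi\|_{\infty, \Omega}$ and $p_{n,ij} \to \phi_{ij}$ pointwise on $\Omega$; the matrix polynomials $P_n := (p_{n,ij})$ then converge pointwise to $\Phi$ in $B(E)$ and are uniformly sup-norm bounded. Assumption \eqref{assumption} and the closed graph theorem render every constant rank-one multiplier $R_{a,b}(f) := \langle f(\cdot), b \rangle_E \, a$ a bounded operator on $\mathscr H$, so each $\mathscr M_{P_n} = \sum_{i,j} p_{n,ij}(\mathscr M_z)\, R_{e_i, e_j}$ is a well-defined bounded operator belonging to $\{\mathscr M_z\}'$.

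The main obstacle is obtaining a uniform bound on $\|\mathscr M_{P_n}\|_{B(\mathscr H)}$, because the matrix-valued von Neumann inequality a priori controls the block operator $(p_{n,ij}(\mathscr M_z))_{ij}$ on $\mathscr H^{(m)}$ rather than $\mathscr M_{P_n}$ on $\mathscr H$. I would bridge this via the factorisation
\[
\mathscr M_{P_n} \;=\; U\, (p_{n,ij}(\mathscr M_z))_{ij}\, V,
\]
where $V : \mathscr H \to \mathscr H^{(m)}$ has $j$-th component $R_{e_1, e_j}$ and $U : \mathscr H^{(m)} \to \mathscr H$ sends $(g_1, \ldots, g_m)$ to $\sum_i R_{e_i, e_1}(g_i)$. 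Both $U$ and $V$ are bounded by \eqref{assumption}, and the identity is a direct computation using $(e_i \otimes e_1)(e_1 \otimes e_j) = e_i \otimes e_j$. Matrix-valued von Neumann then yields $\|\mathscr M_{P_n}\|_{B(\mathscr H)} \leq \|U\| \|V\| K \|P_n\|_{\infty, \Omega}$, uniform in $n$. WOT compactness of norm-bounded subsets of $B(\mathscr H)$ now produces a cluster point $T$ of $\{\mathscr M_{P_n}\}$; testing $\langle \cdot,\, \kappa_{\mathscr H}(\cdot, w) g\rangle$ together with the pointwise convergence $P_n(w) \to \Phi(w)$ identifies $T = \mathscr M_\Phi$. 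Hence $\Phi$ is a multiplier in $\{\mathscr M_z\}'$, $\mathscr R = H^\infty_{B(E)}(\Omega)$, and the stated isometric isomorphism follows.
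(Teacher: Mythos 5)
Your proposal is correct. The first half coincides with the paper's argument: both observe that $T^*$ preserves the joint eigenspaces $\kappa_{\mathscr H}(\cdot,w)E$, extract $\Phi(w)$ from the induced action on $E$, and use \eqref{g-rate} to bound $\|\Phi(w)\|_{B(E)}$ (your sandwich $\|\kappa_{\mathscr H}(w,w)^{-1}\|^{-1}\|A(w)g\|^2\le\|T\|^2\|\kappa_{\mathscr H}(w,w)\|\,\|g\|^2$ even yields the slightly sharper constant $\sqrt{C}\,\|T\|$ in place of the paper's $C\|T\|$, where $C$ is the supremum in \eqref{g-rate}). The second half reaches the same conclusion by a genuinely different mechanism. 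The paper never forms the operators $\mathscr M_{P_n}$: it fixes $F\in\mathscr H^{(m)}$, extracts a weak limit $\tilde F$ of $\big(p^{(n)}_{i,j}(\mathscr M_z)\big)F$, identifies $\tilde f_k(w)=\sum_i\phi_{k,i}(w)f_i(w)$ by testing against kernel functions, and only then invokes \eqref{assumption} to feed in the auxiliary vectors $F^{(k)}$ with components $\langle f(\cdot),g_i\rangle_E\,g_k$, so that $\sum_k\tilde f^{(k)}_k=\mathscr M_\Phi f$. You instead promote \eqref{assumption} to bounded rank-one coordinate operators $R_{e_i,e_j}$ on $\mathscr H$ (boundedness by the closed graph theorem, which is legitimate since norm convergence in $\mathscr H$ implies pointwise convergence), verify the factorization $\mathscr M_{P_n}=\sum_{i,j}p_{n,ij}(\mathscr M_z)R_{e_i,e_j}=U\big(p_{n,ij}(\mathscr M_z)\big)V$ via $R_{e_i,e_1}R_{e_1,e_j}=R_{e_i,e_j}$, transfer the matrix-valued von Neumann bound to $\|\mathscr M_{P_n}\|_{B(\mathscr H)}$ through this factorization, and pass to a WOT cluster point at the operator level. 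Your route buys an explicit uniform multiplier-norm estimate $\|\mathscr M_\Phi\|\le\|U\|\,\|V\|\,K\,m\,M\,\|\Phi\|_{\infty,\Omega}$ together with WOT convergence $\mathscr M_{P_n}\to\mathscr M_\Phi$, and it is close in spirit to the referee's observation (Remark \ref{Hinfty}) that \eqref{assumption} permits a decomposition of $\mathscr H$ into scalar-valued pieces; the paper's vector-level compactness argument achieves the same end without introducing the intertwiners $U$ and $V$.
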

\begin{remark} \label{rmk-c}
The conditions \eqref{assumption} and \eqref{g-rate} are natural in the following sense: 
\begin{enumerate}
\item[$\bullet$] Any $E$-valued polynomial $f$ satisfies \eqref{assumption}. \item[$\bullet$] In case $\dim E=1$, $\kappa_{\mathscr H}$ satisfies \eqref{assumption} as well as \eqref{g-rate}.
\end{enumerate}
\end{remark}
\begin{proof}
Let $T \in B(\mathscr H)$ be in the commutant 
of ${\mathscr M}_z$ and let $w=(w_1, \ldots, w_d)$ be in $\Omega$. 
By Proposition \ref{eigen}(vii), for any $g \in E$ and $j=1, \ldots, d$, $$(\mathscr M^*_{z_j} - \overline{w}_j)T^* \kappa_{\mathscr H}(\cdot, 
w)g = T^*(\mathscr M^*_{z_j} - \overline{w}_j)\kappa_{\mathscr H}(\cdot, w)g =0.$$ 
Thus $T^* $ maps $\kappa_{\mathscr H}(\cdot, 
w)E$ into itself. 
Consequently, there exists an operator $\Phi(w)$ in $B(E)$ such that 
\beq
\label{multiplier}
T^*\kappa_{\mathscr H}(\cdot, 
w)g =\kappa_{\mathscr H}(\cdot, 
w)\Phi(w)^*g, \quad g \in E
\eeq
(see, for instance, \cite[Pg 32]{AM}).
Note that for any $h \in \mathscr H$ and $g \in E$,
\beqn
\inp{(Th)(w)}{g}_E &\overset{\eqref{rp}}=&
 \inp{Th}{\kappa_{\mathscr H}(\cdot, w)g}_{\mathscr H} 
 =  \inp{h}{T^*\kappa_{\mathscr H}(\cdot, w)g}_{\mathscr H} \\ &\overset{\eqref{multiplier}}=&\inp{h}{\kappa_{\mathscr H}(\cdot, w)\Phi(w)^*g}_{\mathscr H} \overset{\eqref{rp}}= \inp{\Phi(w)h(w)}{g}_E.
\eeqn
This shows that $T=\mathscr M_{\Phi}$ for a map $\Phi : \Omega \rar B(E)$.
Further,  since $Tg$ is holomorphic for every $g \in E,$ so is $\Phi$.
We claim that
\beq \label{claim}
\sup_{w \in \Omega}\frac{\|\Phi(w)\kappa_{\mathscr H}(w, w)\|_{_{B(E)}}}{\|\kappa_{\mathscr H}(w, w)\|_{_{B(E)}}} \Le \|\mathscr M_{\Phi}\|.
\eeq
Indeed, for any $w \in \Omega$ and unit vectors $g, \tilde g \in E,$
\beqn
|\inp{\Phi(w)\kappa_{\mathscr H}(w, w)g}{\tilde g}_E| &\overset{\eqref{rp}}=& |\inp{M_\Phi\kappa_{\mathscr H}(\cdot, w)g}{\kappa_{\mathscr H}(\cdot, w)\tilde g}_{\mathscr H}| \\ & \Le & \|\mathscr M_{\Phi}\| \|\kappa_{\mathscr H}(\cdot, w)g\|_{\mathscr H} \|\kappa_{\mathscr H}(\cdot, w)\tilde g\|_{\mathscr H} \\
& \overset{\eqref{+ve}}\Le & \|\mathscr M_{\Phi}\| \|\kappa_{\mathscr H}(w, w)\|_{_{B(E)}}.
\eeqn
Taking supremum over all unit vectors $g, \tilde g \in E$,  the claim stands verified.
Combining \eqref{claim} with the assumption \eqref{g-rate}, we obtain for any $w \in \Omega$,
\beq
\label{norm-e}
\|\Phi(w)\|_{_{B(E)}} & \Le & \|\Phi(w)\kappa_{\mathscr H}(w, w)\|_{_{B(E)}}  \|\kappa_{\mathscr H}(w, w)^{-1}\|_{_{B(E)}} \notag \\ & \Le & \|\mathscr M_{\Phi\,}\| \sup_{w \in \Omega} \|\kappa_{\mathscr H}(w, w)\|_{_{B(E)}} \|\kappa_{\mathscr H}(w, w)^{-1}\|_{_{B(E)}},
\eeq
and hence $T=\mathscr M_{\Phi}$ for $\Phi$ in the algebra $H^{\infty}_{_{B(E)}}(\Omega).$ 
Clearly, $(\mathscr R, ~\|\cdot\|_{B(\mathscr H)})$ is a Banach algebra. 
It follows that the mapping $\mathscr F : \{\mathscr M_z\}' \rar (\mathscr R, ~\|\cdot\|_{B(\mathscr H)})$ given by  $\mathscr F(\mathscr M_{\Phi}) = \Phi$ is an isometric isomorphism. 

To see the remaining part, assume that $\mathscr M_z$ satisfies the matrix-valued von Neumann's inequality. It suffices to check that for every bounded holomorphic function $\Phi : \Omega \rar B(E)$ is a multiplier of $\mathscr H.$ To see that, let $\Phi : \Omega \rar B(E)$ be a bounded holomorphic function. Let $m:=\dim E $ and $\mathcal B:=\{g_j : j =1, \ldots,  m\}$ be an orthonormal basis of $E$. For $w \in \Omega$, let $(\phi_{i, j}(w))_{1 \Le i, j \Le m}$ be the matrix representation of $\Phi(w)$ with respect to the basis $\mathcal B$ of $E$. Fix $ i,j =1, \ldots, m.$ Since $\Phi$ is bounded holomorphic, so is $\phi_{i, j}$. 
By assumption, $\Omega$ is an admissible domain in $\mathbb C^d,$ and hence there exists a sequence $\{p_{i,j}^{(n)}\}_{n=1}^\infty \subseteq \mathbb C[z_1, \ldots, z_d]$ such that 
\begin{enumerate}
\item[$(\mathsf P1)$] ~$\|p_{i,j}^{(n)}\|_{\infty, \Omega}  \Le   M\|\phi_{i, j}\|_{\infty, \Omega}$ for some constant $M >0$, 
\item[$(\mathsf P2)$] ~$p_{i,j}^{(n)}(w)$ converges to $\phi_{i, j}(w)$ as $n \rar \infty$ for every $w \in \Omega$.  
\end{enumerate}
Let $M_m(\mathbb C)$ denote the Banach algebra of $m \times m$ matrices of complex entries endowed with the operator norm, and recall
the fact that \beq \label{matrix-norm} \|(a_{i,j})\|_{M_m(\mathbb C)} \Le m \max_{1 \Le i, j \Le m} |a_{i, j}|, \quad (a_{i, j}) \in M_m(\mathbb C).\eeq
Since ${\mathscr M}_z$ satisfies the matrix-valued von Neumann's inequality, 
for some constant $K>0$, we obtain
\beqn
\big\|\big(p_{i,j}^{(n)}({\mathscr M}_z)\big)\big\|_{B(\mathscr H^{(m)})} &\Le &  K\, \sup_{z \in {\Omega}}\big\|\big(p_{i,j}^{(n)}(z)\big)\big\|_{M_m(\mathbb C)} \\
&\overset{\eqref{matrix-norm}} \Le & 
 K'\, \max_{1 \Le i,j \Le m} \|p_{i,j}^{(n)}\|_{\infty, \Omega} \\
& \overset{(\mathsf P1)}  \Le & ~K'\, M\max_{1 \Le i,j \Le m} \|\phi_{i,j}\|_{\infty, \Omega},
\eeqn
where $K'=m\,K.$
Thus for any $F \in \mathscr H^{(m)}$,
$\big\{\big(p_{i,j}^{(n)}({\mathscr M}_z)\big)F\big\}_{n=1}^\infty$ is a  bounded sequence in $\mathscr H^{(m)}$. By \cite[Theorem 3.6.11]{Si},  $\big\{\big(p_{i,j}^{(n)}({\mathscr M}_z)\big)F\big\}_{n=1}^\infty$ admits a weakly convergent subsequence. For simplicity, we assume that
$\big\{\big(p_{i,j}^{(n)}({\mathscr M}_z)\big)F\big\}_{n=1}^\infty$ itself converges weakly to, say, $\tilde F \in \mathscr H^{(m)}$. That is, 
\beq \label{w-cgn} \lim_{n \rar \infty} \big \langle{\big(p_{i,j}^{(n)}({\mathscr M}_z)\big)F},{H}\big \rangle = \inp{\tilde F}{H}~\mbox{for all} ~H \in \mathscr H^{(m)}. \eeq
Let $F = \oplus_{i=1}^m f_i \in \mathscr H^{(m)}$  and write $\tilde F = \oplus_{i=1}^m \tilde f_i$. 
Fix an integer $k = 1, \ldots, m,$ $g \in E$, $w \in \Omega,$ and set $H := \oplus_{j=1}^m h_j$, where 
\beqn
h_j= \begin{cases} \kappa_{\mathscr H}(\cdot,w)g & \mbox{if} ~j = k, 
\\  0 & \mbox{otherwise}. 
\end{cases}
\eeqn
Note that
\beqn
\big \langle{\big(p_{i,j}^{(n)}({\mathscr M}_z)\big)F},{H}\big \rangle_{\mathscr H^{(m)}}  &=& \Big \langle{\oplus_{j=1}^m \Big(\sum_{i=1}^m p_{j,i}^{(n)}({\mathscr M}_z)f_i\Big)},{\oplus_{j=1}^m h_j} \Big \rangle_{\mathscr H^{(m)}} \\
&=& \sum_{i=1}^m \big \langle{ p_{k,i}^{(n)}({\mathscr M}_z)f_i},{\kappa_{\mathscr H}(\cdot,w)g} \big \rangle_{\mathscr H}\\
&=& \sum_{i=1}^m p_{k,i}^{(n)}(w) \big \langle{f_i(w)},{g}\big \rangle_E.
\eeqn 
Since $p_{i,j}^{(n)}(w)$ converges to $\phi_{i, j}(w)$ (see $(\mathsf P2)$),
after letting $n \rar \infty$ on both sides, by \eqref{w-cgn}, we obtain
$$\inp{\tilde F}{H}_{\mathscr H^{(m)}} =\sum_{i=1}^m \phi_{k,i}(w) \inp{f_i(w)}{g}_E.$$
However, $\inp{\tilde F}{H}_{\mathscr H^{(m)}} =\inp{\tilde f_k(w)}{g}_E,$ so that
$$\inp{\tilde f_k(w)}{g}_E=\sum_{i=1}^m \phi_{k,i}(w) \inp{f_i(w)}{g}_E~\mbox{for every~}g \in E.$$
Consequently, \beq \label{9} \tilde f_k(w)= \sum_{i=1}^m \phi_{k,i}(w)f_i(w), \quad  w \in \Omega, ~k=1, \ldots, m. \eeq 
The preceding discussion shows that for every $F = \oplus_{i=1}^m f_i \in \mathscr H^{(m)}$, we obtain $\tilde{F} = \oplus_{i=1}^m \tilde{f}_i \in \mathscr H^{(m)}$ governed by \eqref{9}.
We apply this association to $F^{(k)}=\oplus_{i=1}^m f^{(k)}_i,$ $k=1,\ldots, m$ given by
\beq
\label{f-i}
f^{(k)}_i(w):={\inp{f(w)}{g_i}}_{E}\, g_k, \quad w \in \Omega, ~i=1, \ldots, m,
\eeq
where $f \in \mathscr H$. By assumption \eqref{assumption}, 
$f^{(k)}_i \in \mathscr H$ for all $i=1, \cdots, m$.
Hence the above association yields $\tilde F^{(k)}=\oplus_{i=1}^m \tilde f^{(k)}_i \in \mathscr H^{(m)},$ $k=1,\ldots, m$.
It follows from \eqref{9} that 
 \beqn \tilde f^{(k)}_j(w) =
  \sum_{i=1}^m \phi_{j,i}(w)f^{(k)}_i(w), \quad  w \in \Omega, ~j=1, \ldots, m, \eeqn 
and hence for any $w \in \Omega,$
\beqn \sum_{k=1}^m \tilde f^{(k)}_k(w) &=& \sum_{k=1}^m  \sum_{i=1}^m \phi_{k,i}(w)f^{(k)}_i(w) \\
&\overset{\eqref{f-i}}=& \sum_{k=1}^m \sum_{i=1}^m \phi_{k,i}(w){\inp{f(w)}{g_i}}_E\, g_k 
= (\mathscr M_{\Phi}f)(w). \eeqn 
Since $\tilde f^{(k)}_k \in \mathscr H$ for $k=1, \ldots, m$, $\mathscr M_{\Phi}f \in \mathscr H$, and hence $\Phi$ is a multiplier of $\mathscr H.$ Trivially, $\mathscr M_{\Phi}$ commutes with $\mathscr M_z.$
\end{proof}
\begin{remark} \label{Hinfty}
It is evident from the proof that the matrix-valued von Neumann's inequality is required only for the choice $m=\dim E$. Further, it has been pointed out by the anonymous referee that one can renorm $\mathscr H$ (assuming \eqref{assumption}), so that it is a RKHS in this equivalent norm
with kernel of the form $\kappa(z, w)I_E$, where $\kappa$ is a scalar-valued kernel. This can be achieved by endowing $\mathscr K_j=\{\inp{f(\cdot)}{e_j} : f \in \mathscr H\}$ with the norm, which makes $f \mapsto  \inp{f(\cdot)}{e_j}$ a quotient map, where $\{e_1, \ldots, e_{\dim E}\}$ is an orthonormal basis. Note that \eqref{assumption} allows us to identify, up to similarity, the $\mathbb C[z_1, \ldots, z_d]$-Hilbert modules $\mathscr H$ and $\oplus_{j=1}^{\dim E}\mathscr K_j$. In particular, Theorem \ref{commutant} can be recovered from its scalar-valued counter-part. 
The advantage gained in this process is that the assumption \eqref{g-rate} can be relaxed. 
\end{remark}

In the remaining part of this section, we discuss several applications of Theorem \ref{commutant}.
The first of which is a direct consequence of Theorem \ref{commutant} and Lemma \ref{dilation-von}.
\begin{corollary} \label{coro-list}
Let $(\mathscr H, \kappa_{\mathscr H}, \Omega, E)$ be a functional Hilbert space with finite dimensional $E$ and let $\mathscr M_z$ denote the commuting $d$-tuple of multiplication operators $\mathscr M_{z_1}, \ldots, \mathscr M_{z_d}$ in $B(\mathscr H)$.
Suppose that the reproducing kernel $\kappa_{\mathscr H}$ satisfies \eqref{assumption} and \eqref{g-rate}.
If the pair
$(\Omega, \mathscr M_z)$ falls in the List \ref{list}, then the
commutant $\{{\mathscr M}_z\}'$ of ${\mathscr M}_z$ is equal to the algebra $H^{\infty}_{_{B(E)}}(\Omega).$
\end{corollary}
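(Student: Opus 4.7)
The plan is to derive Corollary \ref{coro-list} as a direct composition of Lemma \ref{dilation-von} (to secure the matrix-valued von Neumann's inequality for $\mathscr M_z$) with the stronger, second assertion of Theorem \ref{commutant} (to convert that inequality into the identification $\{\mathscr M_z\}' = H^{\infty}_{_{B(E)}}(\Omega)$). The five cases gathered in List \ref{list} are packaged together precisely because each admits a known commuting normal dilation with spectrum in $\overline{\Omega}$, so no new operator-theoretic work is needed beyond quoting the cited dilation theorems.

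Concretely, for each pair $(\Omega, \mathscr M_z)$ appearing in List \ref{list}, the classical references cited alongside the item supply a commuting $d$-tuple $N$ of normal operators on some Hilbert space $\mathcal K \supseteq \mathscr H$ with $\sigma(N) \subseteq \overline{\Omega}$ and $p(\mathscr M_z) = P_{\mathscr H}\, p(N)|_{\mathscr H}$ for every $p \in \mathbb C[z_1, \ldots, z_d]$. This is exactly the hypothesis of Lemma \ref{dilation-von}, which applied case by case yields
\[
\|(p_{i, j}(\mathscr M_z))_{1 \Le i, j \Le m}\|_{B(\mathscr H^{(m)})} \Le \|(p_{i, j})_{1 \Le i, j \Le m}\|_{\infty, \Omega}
\]
for every $m \in \mathbb N$ and $p_{i, j} \in \mathbb C[z_1, \ldots, z_d]$. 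Thus $\mathscr M_z$ satisfies the matrix-valued von Neumann's inequality with constant $K = 1$.

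With this inequality in hand, the remaining hypotheses of the second part of Theorem \ref{commutant} are precisely \eqref{assumption} and \eqref{g-rate}, both of which are built into the statement of Corollary \ref{coro-list}. Theorem \ref{commutant} then produces the isometric isomorphism $\mathscr M_\Phi \mapsto \Phi$ between $\{\mathscr M_z\}'$ and $(H^{\infty}_{_{B(E)}}(\Omega), \|\cdot\|_{B(\mathscr H)})$, which is exactly the sought identification of the commutant with the algebra of bounded holomorphic $B(E)$-valued functions on $\Omega$. I anticipate no substantive obstacle here: once the five cases of List \ref{list} have been checked to match the format of Lemma \ref{dilation-von}, the corollary is a purely formal consequence of the main theorem, with the only care being to verify that each classical dilation in the list is stated in the compressed-polynomial-calculus form that Lemma \ref{dilation-von} requires.
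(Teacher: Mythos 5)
Your proposal is correct and coincides with the paper's own (very brief) justification: List \ref{list} is by construction the list of pairs satisfying the dilation hypothesis of Lemma \ref{dilation-von}, which yields the matrix-valued von Neumann's inequality with $K=1$, and the second assertion of Theorem \ref{commutant} then identifies $\{\mathscr M_z\}'$ with $H^{\infty}_{_{B(E)}}(\Omega)$ via $\mathscr M_\Phi \mapsto \Phi$. No further comment is needed.
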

\begin{remark}
Assume that $\dim E=1$. Then, by Remark \ref{rmk-c}, $\kappa_{\mathscr H}$ satisfies \eqref{assumption} and \eqref{g-rate}. Thus the above corollary is applicable to any pair $(\Omega, ~T:=\mathscr M_z)$ falling in the List \ref{list}.
\end{remark}

The following answers when the commutant of the multiplication tuple $\mathscr M_z$ is abelian (cf. \cite[Theorem 4.12]{SF}, \cite[Section 4, Corollary 2]{S}, \cite[Theorem 7]{Y-2}). 
\begin{corollary} \label{comm-abelian}
Under the assumptions of Theorem \ref{commutant}, the following statements are equivalent:
\begin{enumerate}
 \item[(i)] The commutant $\{\mathscr M_z\}'$ of $\mathscr M_z$ is abelian. 
 \item[(ii)] $\mathscr M_z$ is irreducible.
\item[(iii)]  $\dim E =1.$
\end{enumerate}
\end{corollary}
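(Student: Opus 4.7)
The plan is to use Theorem \ref{commutant} to identify $\{\mathscr M_z\}'$ isometrically with the algebra of multiplier symbols $\mathscr R \subseteq H^\infty_{B(E)}(\Omega)$, after which both equivalences reduce to elementary algebra -- provided that one has a steady supply of constant multipliers. To that end, the preliminary step is to verify that for every $A\in B(E)$ the constant function $\Omega \ni w \mapsto A$ is a multiplier of $\mathscr H$, with $\mathscr M_A\mathscr M_B=\mathscr M_{AB}$ and $\mathscr M_A^{*}=\mathscr M_{A^{*}}$. Fixing an orthonormal basis $\{e_1,\ldots,e_m\}$ of $E$ and writing $A=(a_{ij})$, assumption \eqref{assumption} yields $(Af)(w)=\sum_{i,j}a_{ij}\inp{f(w)}{e_j}_{_E} e_i\in\mathscr H$ for every $f\in\mathscr H$, so $\mathscr M_A\in B(\mathscr H)$ by the closed graph theorem; commutation with each $\mathscr M_{z_j}$ is immediate, and the adjoint identity is checked on the total set $\{\kappa_{\mathscr H}(\cdot,w)g:w\in\Omega,\,g\in E\}$ of Proposition \ref{eigen}(i) via the formula \eqref{multiplier}.

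For (iii)$\Rightarrow$(i), when $\dim E=1$ we have $B(E)=\mathbb C$, so $\mathscr R\subseteq H^\infty(\Omega)$ is abelian and hence so is $\{\mathscr M_z\}'$. For (i)$\Rightarrow$(iii), assuming $\dim E\ge 2$, I pick $A,B\in B(E)$ with $AB\neq BA$; then the constant multipliers $\mathscr M_A,\mathscr M_B$ lie in $\{\mathscr M_z\}'$, and $\mathscr M_A\mathscr M_B-\mathscr M_B\mathscr M_A=\mathscr M_{AB-BA}$ is nonzero because evaluating it on a constant function $g\in E$ (an element of $\mathscr H$ by the density-of-polynomials clause of the definition of functional Hilbert space) gives $(AB-BA)g$, which is nonzero for an appropriate choice of $g$.

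For (ii)$\Rightarrow$(iii), assuming $\dim E\ge 2$, I choose any nontrivial orthogonal projection $P\in B(E)$; by the preliminary step, $\mathscr M_P\in\{\mathscr M_z\}'$ is a self-adjoint idempotent (since $\mathscr M_P^{*}=\mathscr M_{P^{*}}=\mathscr M_P$ and $\mathscr M_P^{2}=\mathscr M_{P^{2}}=\mathscr M_P$), and its range is therefore a reducing subspace for $\mathscr M_z$. Nontriviality of the range and the kernel of $\mathscr M_P$ follows from $Pg\neq 0$ and $(I-P)h\neq 0$ for suitable constants $g,h\in E\subseteq\mathscr H$. For (iii)$\Rightarrow$(ii), every orthogonal projection in $\{\mathscr M_z\}'$ is of the form $\mathscr M_\phi$ with $\phi\in\mathscr R\subseteq H^\infty(\Omega)$, and $\mathscr M_\phi^{2}=\mathscr M_\phi$ forces the holomorphic function $\phi$ to satisfy $\phi^{2}=\phi$ pointwise on $\Omega$; since $\Omega$ is connected (being an admissible domain), $\phi\equiv 0$ or $\phi\equiv 1$, so the associated reducing subspace is trivial.

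I do not expect any serious obstacle to this plan. The only mildly delicate point is the adjoint identity $\mathscr M_A^{*}=\mathscr M_{A^{*}}$ for constant multipliers, and this is dispatched by comparing both sides on the total set of reproducing-kernel sections via \eqref{multiplier}; everything else is bookkeeping once the constant multipliers are in hand.
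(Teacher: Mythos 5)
Your proof is correct and follows essentially the same route as the paper's: identify the commutant with multiplier symbols via Theorem \ref{commutant}, use constant non-commuting multipliers to settle (i)$\Leftrightarrow$(iii), and use constant orthogonal projections (resp. the scalar idempotent argument on the connected domain $\Omega$) for (ii)$\Leftrightarrow$(iii). Your preliminary step verifying via \eqref{assumption} that every constant $A\in B(E)$ is a multiplier, with $\mathscr M_A\mathscr M_B=\mathscr M_{AB}$ and $\mathscr M_A^{*}=\mathscr M_{A^{*}}$, makes explicit a point the paper leaves implicit, but the argument is otherwise the same.
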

\begin{proof}
In the proof, we need the following properties of multipliers:
\begin{enumerate}
 \item[(a)] $\mathscr M_{\Phi}\mathscr M_{\Psi} =\mathscr M_{\Psi}\mathscr M_{\Phi}$ if and only if 
${\Phi}{\Psi} ={\Psi}{\Phi}.$
\item[(b)] $\mathscr M_{\Phi}$ is an orthogonal 
projection if and only if so is $\Phi.$ 
\end{enumerate}
Suppose that (i) holds. If $S, T \in \{{\mathscr M}_z\}'$ then by 
the preceding theorem, $S=\mathscr M_{\Phi}$ and $T=\mathscr M_{\Psi}$ for some bounded holomorphic $B(E)$-valued functions $\Phi, 
\Psi$ on $\Omega$. But then by (a), we must 
have $\Phi \Psi = \Psi \Phi.$ Clearly, in case $\dim E > 1,$ there are constant (and hence bounded and holomorphic) functions $\Phi, \Psi$ which do not commute. Thus (i) holds if and only if $\dim E=1.$ This proves 
the equivalence of (i) and (iii). 

Suppose that $\dim E=1.$ Let $P$ be an orthogonal projection belonging to 
$\{\mathscr M_z\}'.$  By 
Theorem \ref{commutant}, $P=\mathscr M_\Phi$ for some bounded holomorphic $\Phi : \Omega \rar B(E)$. By (b), $\Phi^2=\Phi$, and hence either $\Phi=0$ or $\Phi=I_E.$ This proves that (iii) $\Rightarrow$ (ii).

Suppose that $\dim E \Ge 2$. Consider the 
constant $B(E)$-valued rank one orthogonal projection $\Phi$. By (b), $\mathscr M_\Phi$ is an orthogonal projection. Further, by (a),  
$\mathscr M_{\Phi}$ belongs to $\{\mathscr M_z\}'$. This shows that $\mathscr M_z$ is 
reducible, and hence (ii) $\Rightarrow$ (iii).
\end{proof}
\begin{remark}
Unlike the case of $\dim E=1$ (see \cite[Theorem 2]{Sh-Wa}), the commutant of $\mathscr M_z$ differs from the WOT-closed algebra $\mathscr W_{\mathscr M_z}$ generated by $\mathscr M_z$ and the identity operator $I_{\mathscr H}$ on $\mathscr H$. Indeed, if $\dim E > 1$, then $\{\mathscr M_z\}'$ is non-abelian, whereas $\mathscr W_{\mathscr M_z}$ is easily seen to be abelian.
\end{remark}

The following identifies the commutant of an orthogonal direct sum of finitely many copies of a contractive multiplication operator on a  reproducing kernel Hilbert space of scalar-valued holomorphic functions.
\begin{corollary}
Let $(\mathscr H, \kappa_{\mathscr H}, \mathbb D, \mathbb C)$ be a functional Hilbert space and let $m$ be a positive integer.
If  the operator $\mathscr M_z$ of multiplication by $z$ is contractive, then
the 
commutant $\{{\mathscr M}^{(m)}_z\}'$ of ${\mathscr M}^{(m)}_z \in B(\mathscr H^{(m)})$ is equal to the algebra $H^{\infty}_{_{B(\mathbb C^m)}}(\mathbb D).$
\end{corollary}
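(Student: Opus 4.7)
My approach is to reduce the statement to a direct application of Theorem \ref{commutant} by recognizing $\mathscr H^{(m)}$ as a functional Hilbert space of $\mathbb C^m$-valued holomorphic functions on $\mathbb D$. Concretely, I identify $f = \oplus_{i=1}^m f_i \in \mathscr H^{(m)}$ with the $\mathbb C^m$-valued function $w \mapsto (f_1(w), \ldots, f_m(w))$, and take the $B(\mathbb C^m)$-valued kernel $\tilde\kappa(z, w) := \kappa_{\mathscr H}(z, w)\, I_{\mathbb C^m}$. A direct computation using the reproducing property of $\kappa_{\mathscr H}$ shows that $\tilde\kappa$ reproduces the inner product on $\mathscr H^{(m)}$. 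The $z$-invariance and the density of $\mathbb C^m$-valued polynomials in $\mathscr H^{(m)}$ follow componentwise from the corresponding properties of $\mathscr H$, so $(\mathscr H^{(m)}, \tilde\kappa, \mathbb D, \mathbb C^m)$ is a functional Hilbert space; under this identification, the operator $\mathscr M^{(m)}_z$ becomes the multiplication operator $\mathscr M_{zI_{\mathbb C^m}}$ associated with the coordinate function.

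Next, I verify the hypotheses of Theorem \ref{commutant}. The space $E = \mathbb C^m$ is finite dimensional, and for $f \in \mathscr H^{(m)}$ and $g, h \in \mathbb C^m$, the scalar function $\inp{f(\cdot)}{g}_{\mathbb C^m} = \sum_i \overline{g_i} f_i$ lies in $\mathscr H$, so multiplying by the constant vector $h$ gives a function in $\mathscr H^{(m)}$, verifying \eqref{assumption}. For \eqref{g-rate}, observe that $\tilde\kappa(w, w) = \kappa_{\mathscr H}(w, w)\, I_{\mathbb C^m}$ is a positive scalar multiple of the identity, whence $\|\tilde\kappa(w, w)\|\, \|\tilde\kappa(w, w)^{-1}\| = 1$ for every $w \in \mathbb D$. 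It remains to check the matrix-valued von Neumann's inequality for $\mathscr M^{(m)}_z$. Since $\mathscr M_z$ is a contraction on $\mathscr H$, the pair $(\mathbb D, \mathscr M_z)$ appears in List \ref{list}, so by Sz.-Nagy's dilation theorem $\mathscr M_z$ has a unitary dilation $U$ on some $\mathcal K \supseteq \mathscr H$. Taking $m$ orthogonal copies, $U^{(m)}$ is a unitary on $\mathcal K^{(m)} \supseteq \mathscr H^{(m)}$ dilating $\mathscr M^{(m)}_z$ with $\sigma(U^{(m)}) \subseteq \overline{\mathbb D}$, and Lemma \ref{dilation-von} then delivers the matrix-valued von Neumann's inequality for $\mathscr M^{(m)}_z$.

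With all the hypotheses of Theorem \ref{commutant} verified, I conclude that the commutant $\{\mathscr M^{(m)}_z\}'$ is isometrically isomorphic (via $\Phi \mapsto \mathscr M_\Phi$) to $H^{\infty}_{_{B(\mathbb C^m)}}(\mathbb D)$, so in particular every bounded holomorphic $B(\mathbb C^m)$-valued function on $\mathbb D$ is a multiplier of $\mathscr H^{(m)}$. The main conceptual step is the identification of $\mathscr H^{(m)}$ as a $\mathbb C^m$-valued RKHS with diagonal kernel; once this identification is in place, the three hypotheses of Theorem \ref{commutant} reduce to trivial scalar facts, with the Sz.-Nagy dilation of $\mathscr M_z$ serving as the only genuine input.
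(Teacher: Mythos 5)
Your proposal is correct and follows essentially the same route as the paper: both identify $\mathscr H^{(m)}$ as a functional Hilbert space with the diagonal kernel $\kappa_{\mathscr H}(z,w)I_{\mathbb C^m}$, verify \eqref{assumption} componentwise and \eqref{g-rate} trivially (the kernel at the diagonal being a positive scalar multiple of $I_{\mathbb C^m}$), and obtain the matrix-valued von Neumann's inequality from contractivity via dilation (the paper through List \ref{list} and Lemma \ref{dilation-von}, you by dilating $\mathscr M_z$ and taking $m$ copies, which is equivalent). No gaps.
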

\begin{proof}
Suppose that $\mathscr M_z$ is contractive.
Let $f=\oplus_{j=1}^m f_j \in \mathscr H^{(m)}$ and let $K(z, w)=\kappa_{\mathscr H}(z, w)I_{\mathbb C^m}$.
Then, for any $x=(x_1, \ldots, x_m) \in \mathbb C^m$,
\beqn
\inp{f}{K(\cdot, w)x} = \sum_{j=1}^m \inp{f_j}{k_{\mathscr H}(\cdot, w)x_j}=\sum_{j=1}^m f_j(w)\overline{x}_j=\inp{f(w)}{x}.
\eeqn
In particular, $\mathscr H^{(m)}$ is a reproducing kernel Hilbert space associated with the $B(\mathbb C^m)$-valued kernel $K$ (see \cite[Pg 99-100]{PR}). 
This also shows that $$\inp{f(\cdot)}{x}y  =\sum_{j=1}^m \overline{x}_j  f_j(\cdot) y
\in \mathscr H^{(m)} ~\mbox{for every}~x, y \in \mathbb C^m~\mbox{and~}f \in \mathscr H^{(m)},$$
which is precisely the condition \eqref{assumption}. 
Since $K$ trivially satisfies the boundedness condition \eqref{g-rate}, the desired conclusion is immediate from Theorem \ref{commutant}.
\end{proof}

\section{Reflexivity}

The main result of this section shows that the multiplication tuple $\mathscr M_z$ on any functional Hilbert space $\mathscr H$ satisfying von Neumann's inequality is reflexive. 
Our proof is inspired by the technique usually employed either to compute commutant or to establish reflexivity of the multiplication tuple $\mathscr M_z$ on a reproducing kernel Hilbert space of scalar-valued holomorphic functions (cf.  \cite[Proof of Lemma 5]{Sh-Wa}, \cite[Section 10, Proposition 37]{S},  \cite[Chapter VII, Lemma 8.2]{Co}, and \cite[Section 0, Theorem 4]{Mc}).
The novelty of this refined technique is that it ensures reflexivity of $\mathscr M_z$ under the mild assumption that it satisfies von Neumann's inequality.
\begin{theorem} \label{reflexive}
Let $(\mathscr H, \kappa_{\mathscr H}, \Omega, E)$ be a functional Hilbert space and let $\mathscr M_z$ denote the commuting $d$-tuple of multiplication operators $\mathscr M_{z_1}, \ldots, \mathscr M_{z_d}$ in $B(\mathscr H)$.
Suppose that $\mathscr M_z$ satisfies von Neumann's inequality.
Then $\mathscr M_z$ is reflexive.
\end{theorem}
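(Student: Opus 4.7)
Plan. Since $\mathscr W_{\mathscr M_z}$ is a convex subset of $B(\mathscr H)$, its WOT- and SOT-closures coincide, and so it suffices to prove that every $A \in \mathrm{AlgLat}\,\mathscr W_{\mathscr M_z}$ lies in the SOT-closure of the polynomial algebra in $\mathscr M_z$. Concretely, given $f_1,\dots,f_n \in \mathscr H$ and $\epsilon > 0$, I want to produce a polynomial $p$ with $\|Af_i - p(\mathscr M_z) f_i\| < \epsilon$ for every $i$. Setting $F := f_1 \oplus \cdots \oplus f_n \in \mathscr H^{(n)}$ (which is itself a functional Hilbert space whose multiplication tuple $\mathscr M_z^{(n)}$ satisfies the same von Neumann inequality as $\mathscr M_z$, since $\|p(\mathscr M_z^{(n)})\|=\|p(\mathscr M_z)\|$), the requirement reduces to showing $A^{(n)} F \in \mathscr K$, where $\mathscr K := \overline{\{p(\mathscr M_z^{(n)}) F : p \in \mathbb C[z_1,\dots,z_d]\}}$ is the cyclic subspace of $\mathscr H^{(n)}$ generated by $F$.

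The one-coordinate information is immediate: each $\overline{\mathscr W_{\mathscr M_z} f_i}$ is $\mathscr W_{\mathscr M_z}$-invariant, hence $A$-invariant, so $Af_i \in \overline{\mathscr W_{\mathscr M_z} f_i}$ and there are polynomials $p_k^{(i)}$ with $p_k^{(i)}(\mathscr M_z) f_i \to Af_i$. I plan to proceed via Hahn--Banach duality: if $A^{(n)} F \notin \mathscr K$, one obtains $G = g_1 \oplus \cdots \oplus g_n$ such that the functional $\Lambda(p) := \sum_i \langle p(\mathscr M_z) f_i, g_i\rangle$ vanishes on $\mathbb C[z_1,\dots,z_d]$ while $\sum_i \langle Af_i, g_i\rangle \neq 0$. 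The hypothesis on $\mathscr M_z$ yields $|\Lambda(p)| \le K\|p\|_{\infty,\Omega}\sum_i\|f_i\|\|g_i\|$, so $\Lambda$ is continuous in the sup norm on $\Omega$. Combining admissibility of $\Omega$ (bounded pointwise approximation of $H^{\infty}(\Omega)$ by polynomials) with the density of $\{\kappa_{\mathscr H}(\cdot,w)h : w\in\Omega,\,h\in E\}$ from Proposition~\ref{eigen}(i) and a standard dominated convergence along reproducing kernels, I would unambiguously extend the polynomial functional calculus to a homomorphism $H^{\infty}(\Omega) \to \mathscr W_{\mathscr M_z}$, $\phi \mapsto \phi(\mathscr M_z)$, realized as a WOT-limit of polynomial evaluations, and correspondingly extend $\Lambda$ to a bounded functional $\tilde\Lambda$ on $H^{\infty}(\Omega)$ that continues to vanish.

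The crux, and the main obstacle I foresee, is to transfer the one-coordinate approximations $p_k^{(i)}(\mathscr M_z) f_i \to Af_i$ into a single $\phi \in H^{\infty}(\Omega)$ satisfying $Af_i = \phi(\mathscr M_z) f_i$ \emph{simultaneously} for every $i$; for once such a $\phi$ is in hand, the contradiction $\sum_i \langle Af_i, g_i\rangle = \tilde\Lambda(\phi) = 0$ closes the argument. Following the refined bounded-polynomial-approximation technique of Shields--Wallen \cite{Sh-Wa} and McCarthy \cite{Mc}, I would lift the problem to the inflation $\mathscr H^{(n)}$, replace each $p_k^{(i)}$ by sup-norm-bounded polynomials (invoking admissibility), extract a weak-$*$ subnet in the unit ball of $H^{\infty}(\Omega)$, and use the reproducing-kernel structure of $\mathscr H^{(n)}$ together with the density of polynomial orbits in the cyclic subspace generated by $F$ to verify that the weak-$*$ limit $\phi$ satisfies $\phi(\mathscr M_z^{(n)}) F = A^{(n)} F$, i.e., $\phi(\mathscr M_z) f_i = A f_i$ for every $i$. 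The synchronization step — converting per-coordinate data into a single $H^{\infty}$-function acting on the inflated cyclic vector — is exactly where the interplay of admissibility (to tame sup norms), the scalar von Neumann inequality (to control operator norms uniformly under $p\mapsto p(\mathscr M_z^{(n)})$), and the reproducing-kernel structure (to translate pointwise convergence into WOT-convergence) becomes essential.
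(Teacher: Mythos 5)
There is a genuine gap, and it sits exactly at the point you flag as ``the crux.'' Your plan for the synchronization step does not go through as described. First, the polynomials $p_k^{(i)}$ witnessing $Af_i \in \overline{\mathscr W_{\mathscr M_z} f_i}$ come with no control whatsoever on $\|p_k^{(i)}\|_{\infty,\Omega}$; admissibility cannot be ``invoked'' to replace them by sup-norm-bounded polynomials, since admissibility runs in the opposite direction (from a \emph{given} bounded holomorphic function to a bounded approximating sequence of polynomials), so there is no bounded set in $H^{\infty}(\Omega)$ from which to extract a weak-$*$ subnet. Second, your reduction to $A^{(n)}F \in \mathscr K$ is precisely the statement to be proved and does not follow from $A \in \mathrm{AlgLat}\,\mathscr W_{\mathscr M_z}$: membership in $\mathrm{AlgLat}$ controls invariant subspaces of $\mathscr H$ only, whereas $A^{(n)}F\in\overline{\{p(\mathscr M_z^{(n)})F\}}$ requires $A^{(n)}$ to respect invariant subspaces of the inflation $\mathscr W_{\mathscr M_z}^{(n)}$, which is in general a strictly stronger condition. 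So the Hahn--Banach/weak-$*$ machinery has nothing to latch onto.

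The missing idea --- and the paper's entire engine --- is the joint-eigenvector structure of the reproducing kernel. By Proposition \ref{eigen}(iv), each one-dimensional space $\mathbb C\,\kappa_{\mathscr H}(\cdot,w)g$ lies in $\mathrm{Lat}\,\mathscr W_{\mathscr M_z^*}$, hence in $\mathrm{Lat}\,A^*$, so $A^*\kappa_{\mathscr H}(\cdot,w)g = \overline{\phi_g(w)}\,\kappa_{\mathscr H}(\cdot,w)g$ for a scalar $\phi_g(w)$. Additivity of $g\mapsto\kappa_{\mathscr H}(\cdot,w)g$ together with the linear independence of $\{\kappa_{\mathscr H}(\cdot,w)g_j\}$ (Proposition \ref{eigen}(ii),(iii)) forces $\phi_{g_j}$ to be independent of $j$, producing a \emph{single} scalar function $\phi$ with $Af = \phi f$ for every $f\in\mathscr H$ simultaneously, with $\|\phi\|_{\infty,\Omega}\le\|A\|$ and $\phi$ holomorphic. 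This dissolves the synchronization problem before it arises: no inflation, no duality, and no per-vector polynomial sequences are needed. Only then do admissibility (to get $p_n\to\phi$ pointwise with $\|p_n\|_{\infty,\Omega}\le M\|\phi\|_{\infty,\Omega}$) and the von Neumann inequality (to bound $\|p_n(\mathscr M_z)\|$ uniformly, so that pointwise convergence against the kernel functions upgrades to WOT convergence via Proposition \ref{eigen}(i)) enter, yielding $A\in\mathscr W_{\mathscr M_z}$. Your proposal cites the right sources and correctly isolates where the difficulty lies, but without the eigenvector step it does not close.
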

\begin{proof}
Clearly, $\mathscr W_{\mathscr M_z} \subseteq \mbox{AlgLat}\, \mathscr W_{\mathscr M_z}$. To see the reverse inclusion, let
 $A$ belong to $\mbox{AlgLat}\, \mathscr W_{\mathscr M_z}$, and note that
$\mbox{Lat}\, \mathscr W_{\mathscr M^*_{z}} \subseteq \mbox{Lat}\, {A^*}$. By Proposition \ref{eigen}(iv),
\beqn
\mathcal M_{w, g}:= \{a \kappa_{\mathscr H}(\cdot, w)g : a \in \mathbb C\} \in \mbox{Lat}\, \mathscr W_{\mathscr M^*_{z}}, \quad w \in \Omega, ~g \in E.
\eeqn
Thus there exists a scalar $\phi_{g}(w)$ such that \beq \label{action-A} A^*\kappa_{\mathscr H}(\cdot, w)g=\overline{\phi_{g}(w)}\,\kappa_{\mathscr H}(\cdot, w)g, \quad w \in \Omega, ~g \in E. \eeq Let $\{g_n\}_{n \in \Lambda}$ be an orthonormal basis of $E$. We contend that 
\beq \label{gi-gk}
\phi_{g_j} = \phi_{g_k}, \quad j, k \in \Lambda.
\eeq
For $w \in \Omega$ and $j, k \in \Lambda$, by two applications of Proposition \ref{eigen}(ii), we have
\beqn
\overline{\phi_{g_j}(w)}\,\kappa_{\mathscr H}(\cdot, w)g_j + \overline{\phi_{g_k}(w)}\,\kappa_{\mathscr H}(\cdot, w)g_k & \overset{\eqref{action-A}} = & A^*\kappa_{\mathscr H}(\cdot, w)g_j + A^*\kappa_{\mathscr H}(\cdot, w)g_k \\ &=& A^*\Big(\kappa_{\mathscr H}(\cdot, w)g_j + \kappa_{\mathscr H}(\cdot, w)g_k\Big) \\ &=& A^*\kappa_{\mathscr H}(\cdot, w)(g_j + g_k) \\ &\overset{\eqref{action-A}}=& \overline{\phi_{g_j + g_k}(w)}\,\kappa_{\mathscr H}(\cdot, w)(g_j + g_k) 
\\ &=& \overline{\phi_{g_j + g_k}(w)}\,(\kappa_{\mathscr H}(\cdot, w)g_j + \kappa_{\mathscr H}(\cdot, w)g_k).
\eeqn
However, by Proposition \ref{eigen}(iii), $\{\kappa_{\mathscr H}(\cdot, w)g_j\}_{j \in \Lambda}$ forms a linearly independent subset of $\mathscr H,$ and hence we conclude that
\beqn
\overline{\phi_{g_j}(w)} = \overline{\phi_{g_j + g_k}(w)} =\overline{\phi_{g_k}(w)}. 
\eeqn
This yields \eqref{gi-gk}. Let $\phi : \Omega \rar \mathbb C$ be a function such that $\phi_{g_j} = \phi$ for all $j \in \Lambda.$ It is now immediate from  \eqref{action-A} that
\beq \label{action-A-star} A^*\kappa_{\mathscr H}(\cdot, w)g_j=\overline{\phi(w)}\,\kappa_{\mathscr H}(\cdot, w)g_j, \quad w \in \Omega, ~j \in \Lambda. \eeq
This implies that 
\beqn
\quad |\overline{\phi(w)}|\|\kappa_{\mathscr H}(\cdot, w)g_j\|=\|A^*\kappa_{\mathscr H}(\cdot, w)g_j\| \Le \|A^*\|\|\kappa_{\mathscr H}(\cdot, w)g_j\|, \quad w \in \Omega, ~j \in \Lambda.
\eeqn
Since $\kappa_{\mathscr H}(\cdot, w)g_j \neq 0$ (see Proposition \ref{eigen}(iii)),
the above estimate shows that $\|\phi\|_{\infty, \Omega} \Le \|A^*\|$, and hence $\phi$ is bounded.
Further, for any $f \in \mathscr H$ and $w \in \Omega,$ 
\begin{alignat}{2}
\label{A-phi}
(Af)(w) & ~=~ \sum_{j \in \Lambda} \inp{(Af)(w)}{g_j}g_j  \overset{\eqref{rp}}   =  \sum_{j \in \Lambda} \inp{Af}{\kappa_{\mathscr H}(\cdot, w)g_j}g_j  & \notag \\ 
&\overset{\eqref{action-A-star}}  =  \phi(w) \sum_{j \in \Lambda} \inp{f}{\kappa_{\mathscr H}(\cdot, w)g_j}g_j   \overset{\eqref{rp}}  =  \phi(w)f(w). &
\end{alignat}
Since $Af \in \mathscr H$, $\phi f \in \mathscr H$ for every $f \in \mathscr H.$ This shows that $\phi g_j \in \mathscr H$ for every $j \in \Lambda$. However, $\phi(w)=\inp{\phi(w)g_j}{g_j}_E$, $j \in \Lambda$, and hence $\phi$ is holomorphic. 
Since $\Omega$ is admissible, there exists a sequence of polynomials $\{p_n\}_{n \in \mathbb N} \subseteq \mathbb C[z_1, \ldots, z_d]$ such that for some positive constant $M,$
\beq
\label{p-cgn}
\|p_n\|_{\infty, \Omega} \Le M \|\phi\|_{\infty, \Omega}, ~\lim_{n \rar \infty} p_n(w) = \phi(w), \quad w \in \Omega.
\eeq
Note that for $f \in \mathscr H$, $w \in \Omega$ and $g \in E,$
\beq \label{limit}
\lim_{n \rar \infty} \inp{p_n(\mathscr M_z)f}{\kappa_{\mathscr H}(\cdot, w)g}_{\mathscr H}  &\overset{\eqref{rp}}=& \lim_{n \rar \infty} \inp{p_n(w)f(w)}{g}_E \notag \\ & \overset{\eqref{p-cgn}}=&  \inp{\phi(w)f(w)}{g}_E \notag \\ &\overset{\eqref{A-phi}} =& \inp{(Af)(w)}{g}_E \notag \\ & \overset{\eqref{rp}} = & \inp{Af}{\kappa_{\mathscr H}(\cdot, w)g}_{\mathscr H}.
\eeq
By von Neumann's inequality and \eqref{p-cgn}, $\{p_n(\mathscr M_z)\}_{n \in \mathbb N}$ is a bounded sequence. 
This combined with \eqref{limit} and Proposition \ref{eigen}(i) shows that $\{p_n(\mathscr M_z)\}_{n \in \mathbb N}$ converges to $A$ in WOT. This shows that
$
A \in \mathscr W_{\mathscr M_z},$ and hence
 $\mbox{AlgLat}\, \mathscr W_{\mathscr M_z} \subseteq \mathscr W_{\mathscr M_z}.$ This completes the proof of the theorem.
\end{proof}
\begin{remark} \label{rmk-reflexive}
We note the following:
\begin{enumerate}
\item It is evident from the proof of Theorem \ref{reflexive} that the assumption of the density of $E$-valued analytic polynomials is inessential (cf. Remark \ref{rmk-poly}). 
\item Theorem \ref{reflexive} is applicable to the multiplication tuple $\mathscr M_z$ acting on a functional Hilbert space, which is a {\it $\Gamma$-contraction} in the sense of \cite{AY}.
\item 
As pointed out by the anonymous referee, the part of Theorem \ref{reflexive} till \eqref{A-phi} can also be deduced from \cite[Corollary 2.2]{Ba}. However, after applying it to the algebra $\mathscr S$ of all diagonal operators on $E$, one may conclude that every operator in $\mbox{AlgLat}(\mathscr W_{\mathscr M_z})$ is of the form $\mathscr M_{\phi}$ for a $\mathscr S$-valued multiplier $\phi$. Since $\dim \mathscr S$ could be bigger than $1$, it is not clear to the authors how to deduce that the multiplier $\phi$ is indeed scalar-valued.
\end{enumerate}
\end{remark}

%

We discuss below several consequences of Theorem \ref{reflexive}.
\begin{corollary} \label{coro-list2}
Let $(\mathscr H, \kappa_{\mathscr H}, \Omega, E)$ be a functional Hilbert space  and let $\mathscr M_z$ denote the commuting $d$-tuple of multiplication operators $\mathscr M_{z_1}, \ldots, \mathscr M_{z_d}$ in $B(\mathscr H)$.
If the pair
$(\Omega, \mathscr M_z)$ falls in the List \ref{list}, then $\mathscr M_z$ is reflexive.
\end{corollary}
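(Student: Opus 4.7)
The plan is to observe that this corollary is immediate from the combination of Theorem \ref{reflexive} with Lemma \ref{dilation-von} and the enumeration in List \ref{list}. Theorem \ref{reflexive} requires only one hypothesis on $\mathscr M_z$, namely that it satisfies von Neumann's inequality, so the only work is to verify that every entry in List \ref{list} guarantees this.

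First, I would note that List \ref{list} was introduced precisely as a catalogue of pairs $(\Omega, T)$ for which a normal dilation $N$ with $\sigma(N)\subseteq\overline{\Omega}$ is known to exist (Sz.-Nagy--Foia\c s dilation on $\mathbb D$, Ando's dilation on $\mathbb D^2$, the weighted shift and joint contractive cases on $\mathbb D^d$, and the Athavale--M\"uller-Vasilescu type dilation on $\mathbb B^d$). Given such a pair, Lemma \ref{dilation-von} supplies the matrix-valued von Neumann's inequality
\[
\|(p_{i,j}(\mathscr M_z))\|_{B(\mathscr H^{(m)})}\Le \|(p_{i,j})\|_{\infty,\Omega}
\]
for all polynomial matrices, and specializing to $m=1$ yields the scalar von Neumann's inequality.

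With von Neumann's inequality in hand, Theorem \ref{reflexive} applies verbatim and gives reflexivity of $\mathscr M_z$. Thus the proof reduces to invoking three previous results in sequence; there is no new computation, and no obstacle to speak of beyond recording the dilation references associated with each item of the list.
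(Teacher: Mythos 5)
Your proposal is correct and follows exactly the route the paper intends: List \ref{list} supplies the normal dilation hypothesis of Lemma \ref{dilation-von}, which (taking $m=1$) yields von Neumann's inequality, and Theorem \ref{reflexive} then gives reflexivity. The paper leaves this chain implicit, but it is the same argument it uses elsewhere (e.g. in the proof of Proposition \ref{multi}).
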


The next corollary shows that certain joint subnormal multiplication tuples acting on functional Hilbert spaces are reflexive. In particular, it recovers special cases of \cite[Theorem 3]{OT}, \cite[Theorem 2.4]{B1}, \cite[Theorem 3.8]{Es3}, \cite[Theorem 5]{Di-1} and \cite[Theorem 1]{Es}. Recall that a commuting $d$-tuple $S=(S_1, \ldots, S_d)$ in $B(\mathcal H)$ is {\it joint subnormal} if there exist a Hilbert space $\mathcal K \supseteq \mathcal H$ and a commuting $d$-tuple $N$ of normal operators $N_1, \ldots, N_d$ in $B(\mathcal K)$ such that 
$S_j = {N_j}|_{\mathcal H}$ for $j=1, \ldots, d.$
\begin{corollary}
Let $(\mathscr H, \kappa_{\mathscr H}, \Omega, E)$ be a functional Hilbert space and let $\mathscr M_z$ denote the commuting $d$-tuple of multiplication operators $\mathscr M_{z_1}, \ldots, \mathscr M_{z_d}$ in $B(\mathscr H)$. If $\mathscr M_z$ is a joint subnormal $d$-tuple with normal extension $N$ such that $\sigma(N) \subseteq \overline{\Omega}$, then 
$\mathscr M_z$ is reflexive.
\end{corollary}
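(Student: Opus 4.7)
The plan is simply to invoke Theorem \ref{reflexive} once we verify that $\mathscr M_z$ satisfies von Neumann's inequality. The bridge between joint subnormality (with spectrum inside $\overline{\Omega}$) and von Neumann's inequality is Lemma \ref{dilation-von}, so the only real task is to check that the hypothesis of that lemma is met.

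First, I would unpack the definition of joint subnormality: there is a Hilbert space $\mathcal K \supseteq \mathscr H$ and a commuting $d$-tuple $N = (N_1, \ldots, N_d)$ of normal operators on $\mathcal K$ with $N_j|_{\mathscr H} = \mathscr M_{z_j}$. In particular, $\mathscr H$ is invariant under each $N_j$. Because the $N_j$'s commute, $\mathscr H$ is invariant under every monomial $N_1^{\alpha_1}\cdots N_d^{\alpha_d}$, and the restriction of such a monomial to $\mathscr H$ equals $\mathscr M_{z_1}^{\alpha_1}\cdots \mathscr M_{z_d}^{\alpha_d}$. Hence, for any $p \in \mathbb C[z_1, \ldots, z_d]$,
\begin{equation*}
p(N)\mathscr H \subseteq \mathscr H, \qquad p(N)|_{\mathscr H} = p(\mathscr M_z),
\end{equation*}
and therefore $P_{\mathscr H}\,p(N)|_{\mathscr H} = p(\mathscr M_z)$.

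Combined with the assumed spectral containment $\sigma(N) \subseteq \overline{\Omega}$, this places the pair $(\Omega, \mathscr M_z)$ within the scope of Lemma \ref{dilation-von}. Applying that lemma (with $m=1$) yields
\begin{equation*}
\|p(\mathscr M_z)\|_{B(\mathscr H)} \Le \|p\|_{\infty, \Omega}, \quad p \in \mathbb C[z_1, \ldots, z_d],
\end{equation*}
which is precisely von Neumann's inequality (with constant $K=1$). Theorem \ref{reflexive} then delivers the reflexivity of $\mathscr M_z$.

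There is no genuine obstacle here; the argument is a chain of routine invocations. The only point requiring a sentence of care is the passage from "normal extension" (a dilation in which $\mathscr H$ is actually invariant) to the dilation identity $p(\mathscr M_z) = P_{\mathscr H}\,p(N)|_{\mathscr H}$, which follows cleanly from commutativity of the $N_j$'s together with invariance of $\mathscr H$ under each of them.
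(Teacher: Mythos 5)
Your proposal is correct and follows essentially the same route as the paper: both reduce the corollary to verifying von Neumann's inequality for $\mathscr M_z$ and then invoke Theorem \ref{reflexive}. The only cosmetic difference is that the paper cites the spectral theorem for normal tuples directly to obtain the inequality, whereas you route through Lemma \ref{dilation-von} (which rests on the same fact); your extra care with the dilation identity $P_{\mathscr H}\,p(N)|_{\mathscr H} = p(\mathscr M_z)$ is sound and harmless.
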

\begin{proof}
By the spectral theorem for normal tuples \cite{AM}, $\mathscr M_z$ satisfies von Neumann's inequality. Now apply Theorem \ref{reflexive}.
\end{proof}

A celebrated result of Brown and Chevreau \cite{BC} states that any contraction with isometric $H^{\infty}$-functional calculus is reflexive (see \cite[Chapter III, Theorem 11.3]{SF} for exact statement).
The following provides a sufficient condition for reflexivity of polynomially bounded operators (cf. \cite[Proposition 4.4]{CEP}, \cite[Theorem A]{AMu}). Recall that $T \in B(\mathcal H)$ is {\it polynomially bounded} if there exists a  constant $M >0$ such that
$$ \|p(T)\|_{B(\mathscr H)} \Le M \|p\|_{\infty, \mathbb D}~\mbox{for every}~p \in \mathbb C[z].$$
\begin{corollary}
Any left-invertible, analytic polynomially bounded $T$ in $B(\mathcal H)$ is reflexive provided the spectral radius of the Cauchy dual operator $T'$ is at most $1$.
\end{corollary}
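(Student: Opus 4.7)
The approach is to realize $T$ as the multiplication operator $\mathscr M_z$ on a functional Hilbert space over $\mathbb D$ via Shimorin's analytic model, and then invoke Theorem~\ref{reflexive}.

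Set $E := \ker T^*$. Because $T$ is left-invertible, the Cauchy dual $T'=T(T^*T)^{-1}$ is defined, and the hypothesis $r(T') \Le 1$ gives $1/r(T') \Ge 1$. By Shimorin's construction in \cite{Shimorin} (see also \cite{CT}), analyticity and left-invertibility of $T$ ensure that
\[
(Uh)(z) := \sum_{n=0}^{\infty} \bigl(P_E T'^{*n} h\bigr)\, z^n
\]
defines a unitary operator $U$ from $\mathcal H$ onto a reproducing kernel Hilbert space of $E$-valued holomorphic functions on the disk $\mathbb D_{1/r(T')}$, intertwining $T$ with multiplication by $z$. Since this disk contains $\mathbb D$, restricting every function in the image to $\mathbb D$ (injective by the identity principle) presents us with a functional Hilbert space $(\mathscr H, \kappa_{\mathscr H}, \mathbb D, E)$ in the sense of this paper: $\mathbb D$ is admissible (being bounded, star-shaped, and with polynomially convex closure; cf.\ the remarks in Section~2), $z$-invariance of $\mathscr H$ follows from the intertwining $UTU^* = \mathscr M_z$, and density of $E$-valued polynomials in $\mathscr H$ mirrors the density in $\mathcal H$ of $\{T^n e : n \in \mathbb N,\, e \in E\}$, which is built into the Shimorin construction of left-invertible analytic operators.

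The polynomial boundedness of $T$ transfers through the unitary $U$ to
\[
\|p(\mathscr M_z)\|_{B(\mathscr H)} = \|p(T)\|_{B(\mathcal H)} \Le M\, \|p\|_{\infty, \mathbb D}, \quad p \in \mathbb C[z],
\]
so $\mathscr M_z$ satisfies von Neumann's inequality on $\mathbb D$. Theorem~\ref{reflexive} then yields reflexivity of $\mathscr M_z$, and hence of $T$, since reflexivity is preserved under unitary equivalence (the intertwining $U$ carries Lat onto Lat and respects polynomial functional calculus).

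The main technical step is the Shimorin reduction itself: the assumption $r(T') \Le 1$ is exactly what is needed to guarantee that the model functions are holomorphic on (at least) $\mathbb D$, which legitimizes the use of $\mathbb D$ as the admissible domain carrying both the functional Hilbert space structure and the relevant form of von Neumann's inequality. The remaining ingredients (admissibility of $\mathbb D$, density of $E$-valued polynomials in the Shimorin model, and transfer of polynomial boundedness through $U$) are essentially bookkeeping.
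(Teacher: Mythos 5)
Your proof follows the same route as the paper's: pass to Shimorin's analytic model on a disc of radius $1/r(T')\Ge 1$, restrict to $\mathbb D$, transfer polynomial boundedness through the unitary to obtain von Neumann's inequality for $\mathscr M_z$ over the admissible domain $\mathbb D$, and invoke Theorem~\ref{reflexive}. Almost all of your bookkeeping is correct, but one step is not justified: the claim that density of the $E$-valued polynomials in the model space ``is built into the Shimorin construction of left-invertible analytic operators.'' Since $Ug=g$ for $g\in E=\ker T^*$ and $U$ intertwines $T$ with $\mathscr M_z$, one has $U(T^ng)=z^ng$, so density of $E$-valued polynomials in $\mathscr H$ is \emph{equivalent} to the wandering subspace property $\mathcal H=\bigvee\{T^ng : n\in\mathbb N,\ g\in E\}$; this is not automatic for left-invertible analytic operators --- Shimorin proves it only for restricted classes (concave operators and operators satisfying $\|Tx+y\|^2\Le 2(\|x\|^2+\|Ty\|^2)$), not in the generality assumed here, and neither polynomial boundedness nor $r(T')\Le 1$ supplies it. The gap is harmless only because, as recorded in Remark~\ref{rmk-reflexive}(1), the proof of Theorem~\ref{reflexive} never uses the density-of-polynomials axiom of a functional Hilbert space; you should either invoke that remark explicitly or delete the unproved density claim. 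With that repair your argument coincides with the paper's (which is stated as an immediate consequence of Theorem~\ref{reflexive} and Shimorin's model, with $r(T')\Le 1$ guaranteeing holomorphy on $\mathbb D$).
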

\begin{proof}
This is immediate from Theorem \ref{reflexive} and Shimorin's analytic model for left-invertible analytic operators \cite{Shimorin}, where the assumption that $r(T') \Le 1$ ensures that the $E$-valued functions in the model space $\mathscr H$ of $T$ are holomorphic in the open unit disc $\mathbb D$.
\end{proof}
The last corollary is applicable to any analytic operator $T$ in $B(\mathcal H)$ satisfying the following inequality:
\beqn
\|Tx+y\|^2 \Le 2(\|x\|^2+\|Ty\|^2), \quad x, y \in \mathcal H.
\eeqn
In fact, an examination of the proof of \cite[Theorem 3.6]{Shimorin} shows that the Cauchy dual $T'$ of $T$ exists and
satisfies $I-2T'^{*}T' + T'^{*2}T'^2 \Le 0$. It can be concluded from \cite[Lemma 1]{Ri} that $T$ is a contraction and the spectral radius of $T'$ is at most $1.$ 
\section{Applications to weighted shifts on rooted directed trees}

The reader is referred to \cite{Jablonski} for all the relevant definitions pertaining to the rooted directed trees and associated weighted shifts. 
Let $\mathscr T = (V, \mathcal E)$ be a leafless, rooted directed tree and let $$V_\prec := \{v \in V : \mbox{card}(\child{v}) \Ge 2\}$$ denote the set of branching vertices. The {\it branching index} $k_{\mathscr T}$ of $\mathscr T$ is defined as
\[k_\mathscr{T}:=\begin{cases}
 1+\sup\{\dep_w:w\in V_{\prec}\}& \text{if $V_{\prec}$ is non-empty},\\
 0& \text{otherwise},
\end{cases}
\] 
where $\dep_w$ is the unique non-negative integer such that $w \in \mathsf{Chi}^{\langle \dep_w\rangle}(\mathsf{root})$ (see \cite[Corollary 2.1.5]{Jablonski}). We refer to $\dep_w$ as the {\it depth} of $w$ in $\mathscr T.$ Let $S_\lambda$ be a weighted shift on a rooted directed tree $\mathscr T$. Then $E := \ker(S^*_\lambda)$ is finite dimensional if and only if $\mathscr T$ is locally finite with finite branching index (see \cite[Proposition 2.1]{CT}). 

Let $\mathscr T=(V, \mathcal E)$ be a leafless, locally finite rooted directed tree. 
For an integer $a \Ge 2,$ the {\it Bergman shift} $\mathscr B_a$ is the weighted shift on $\mathscr T$ with weights given by
\beq \label{wts} 
\lambda_{u, a} = 
 \frac{1}{\sqrt{\mbox{card}(\child{v})}}\,\sqrt{\frac{\dep_v+1}{\dep_v + a}}, \quad u \in \child{v},~v \in V,
\eeq
where $\dep_v$ is the depth of $v$ in $\mathscr T.$
Needless to say, the shift $\mathscr B_2$ is the {\it Bergman shift} if $\mathscr T$ is the rooted directed tree without any branching vertex. By \cite[Proposition 5.1.8]{CPT}, $\mathscr B_a$ is unitarily equivalent to the operator $\mathscr M_{z, a}$ of multiplication by the coordinate function $z$ on $\mathscr H_{a}$,
where $\mathscr H_{a}$ is the reproducing kernel Hilbert space associated with the reproducing kernel $\kappa_{\mathscr H_{a}} : \mathbb D \times \mathbb D \rar B(E)$ given by
\beq
\label{Berg-k}
\kappa_{\mathscr H_a}(z, w) 
&=& \sum_{n=0}^{\infty}{n+a-1 \choose n}~ 
z^n \overline{w}^n\,P_{[e_{\mathsf{root}}]}   \\ &+& \notag
\sum_{v \in V_{\prec}} \sum_{n=0}^{\infty}  
\frac{(\dep_v + n + a)! (\dep_v +1)!}{(\dep_v +a)! (\dep_v + n+1)!}\,
z^n\overline{w}^n\,
P_{l^2(\child{v}) \ominus [\Gamma_v]}, \quad z, w \in \mathbb D.
\eeq
Here $E=\ker(\mathscr B^*_a)$ and $\Gamma_v : \mathsf{Chi}(v) \rar \mathbb C$ 
is given by 
$\Gamma_v = \sum_{u \in \mathsf{Chi}(v)} \lambda_u e_u$. Clearly, $\kappa_{\mathscr H_a}(\lambda, 0) = I_E$ for any $\lambda \in \mathbb D$. Further, it can be easily seen that
\beq \label{moment}
\|z^ng\|^2 =\frac{(\dep_v +a-1)! (\dep_v + n)!}{(\dep_v + n + a-1)! \dep_v !}, \quad g \in l^2(\childn{\dep_v}{\mathsf{root}}).
\eeq

\begin{proposition} \label{comm-B}
Let $\mathscr T=(V, \mathcal E)$ be a leafless, locally finite rooted directed tree and let $\mathscr B_a$ be the Bergman shift on $\mathscr T$. If $\mathscr T$ has finite branching index, then the commutant $\{\mathscr B_a\}'$ of $\mathscr B_a$ is isometrically isomorphic to $(H^{\infty}_{_{B(E)}}(\mathbb D), \|\cdot\|_{B(\mathscr H)})$,
where the finite dimensional Hilbert space $E$ equals the kernel of $\mathscr B^*_a$.
\end{proposition}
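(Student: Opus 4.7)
The plan is to apply Theorem \ref{commutant} to the functional Hilbert space $(\mathscr H_a,\kappa_{\mathscr H_a},\D,E)$ and then transport the conclusion through the unitary equivalence $\mathscr B_a\simeq \mathscr M_{z,a}$ recorded in \cite[Proposition 5.1.8]{CPT}. The finite-dimensionality of $E=\ker(\mathscr B_a^*)$ is automatic: the branching-index hypothesis pins $\dep_v<k_{\mathscr T}$ for every $v\in V_\prec$, and local finiteness of $\mathscr T$ then forces $V_\prec$ itself to be finite, so the decomposition $E=[e_{\mathsf{root}}]\oplus\bigoplus_{v\in V_\prec}(l^2(\child{v})\ominus[\Gamma_v])$ is a finite direct sum and the series in \eqref{Berg-k} truncates to finitely many summands. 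Three items then remain to verify: the matrix-valued von Neumann inequality for $\mathscr M_{z,a}$, the richness condition \eqref{assumption}, and the diagonal-growth condition \eqref{g-rate}.

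The matrix-valued von Neumann step is the cheapest. A direct computation with the weights \eqref{wts} yields $\sum_{u\in\child{v}}\lambda_{u,a}^2=\frac{\dep_v+1}{\dep_v+a}\le 1$ for every $v$, so $\mathscr B_a$ is a Hilbert-space contraction. The first bullet of List \ref{list} combined with Lemma \ref{dilation-von} then supplies the matrix-valued von Neumann inequality on $\overline{\D}$ via Sz.-Nagy unitary dilation.

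The substantive work is the analysis of the block-diagonal kernel \eqref{Berg-k}. Set $k_0(z,w):=(1-z\bar w)^{-a}$ and, for each $v\in V_\prec$, $k_v(z,w):=\sum_{n\ge0}c_n^{(v)}(z\bar w)^n$ with $c_n^{(v)}:=\frac{(\dep_v+n+a)!(\dep_v+1)!}{(\dep_v+a)!(\dep_v+n+1)!}$. Since $P_{[e_{\mathsf{root}}]}$ and $\{P_{l^2(\child{v})\ominus[\Gamma_v]}\}_{v\in V_\prec}$ are pairwise orthogonal and sum to $I_E$, the operator $\kappa_{\mathscr H_a}(w,w)$ is simultaneously diagonalised with scalar eigenvalues $k_0(w,w)$ and $k_v(w,w)$. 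The key identity is
\begin{equation*}
\frac{c_n^{(v)}}{\binom{n+a-1}{n}}=\frac{(\dep_v+1)!(a-1)!}{(\dep_v+a)!}\prod_{j=1}^{a-1}\Bigl(1+\frac{\dep_v+1}{n+j}\Bigr),
\end{equation*}
whose right-hand side decreases in $n$ from $1$ at $n=0$ down to the positive limit $\frac{(\dep_v+1)!(a-1)!}{(\dep_v+a)!}$ as $n\to\infty$. Since $V_\prec$ is finite, one obtains constants $0<c\le 1$ with $c\le c_n^{(v)}/\binom{n+a-1}{n}\le 1$ uniformly in $n$ and $v$. Two consequences drop out. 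First, $c\,k_0(w,w)\le k_v(w,w)\le k_0(w,w)$ on $\D$, so $\|\kappa_{\mathscr H_a}(w,w)\|\,\|\kappa_{\mathscr H_a}(w,w)^{-1}\|\le 1/c$, which is precisely \eqref{g-rate}. Second, every scalar RKHS $\mathcal H_{k_v}$ coincides with $\mathcal H_{k_0}$ as a set with an equivalent norm, so $\mathscr H_a$ identifies as a vector space with $\mathcal H_{k_0}\otimes E$ under an equivalent renorming; for $f\in\mathscr H_a$ and $g,h\in E$ the scalar function $\langle f(\cdot),g\rangle_E$ is a finite linear combination of components of $f$, hence lies in $\mathcal H_{k_0}$, and its tensor with $h$ returns to $\mathscr H_a$, giving \eqref{assumption}.

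With all three hypotheses of Theorem \ref{commutant} in place, the theorem identifies $\{\mathscr M_{z,a}\}'$ isometrically with $(H^\infty_{B(E)}(\D),\|\cdot\|_{B(\mathscr H)})$, and the Shimorin identification transports this to $\{\mathscr B_a\}'$. The step I anticipate as the main obstacle is the uniform coefficient comparison displayed above: the limiting constant $\frac{(\dep_v+1)!(a-1)!}{(\dep_v+a)!}$ tends to $0$ as $\dep_v\to\infty$, so without the finite-branching-index hypothesis capping the depths of branching vertices one cannot pinch the $k_v$'s by a common positive multiple of $k_0$, and both \eqref{assumption} and \eqref{g-rate} would then require a substantially different argument—or might fail outright—in the general locally finite setting.
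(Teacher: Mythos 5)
Your proposal is correct, and its overall strategy coincides with the paper's: reduce to Theorem \ref{commutant} via the unitary equivalence of \cite[Proposition 5.1.8]{CPT}, get the matrix-valued von Neumann inequality from contractivity of $\mathscr B_a$ (the same weight computation and \cite[Proposition 3.1.8]{Jablonski}, then List \ref{list} and Lemma \ref{dilation-von}), and verify \eqref{assumption} and \eqref{g-rate} for the block-diagonal kernel \eqref{Berg-k}. Where you genuinely diverge is in how the two kernel hypotheses are checked. The paper treats them separately: \eqref{assumption} is obtained by expanding $f=\sum_n a_nz^n$, using the mutual orthogonality of $\{z^nE\}_{n}$ and the moment formula \eqref{moment} to reduce the question to boundedness of the ratios $\|z^ng_k\|^2/\|z^ng_j\|^2$; and \eqref{g-rate} is obtained by identifying the extreme eigenvalues of $\kappa_{\mathscr H_a}(w,w)$ (using that $a_{m,n}$ decreases in $m$) and then splitting the quotient of the two series into a head, bounded by a polynomial in $|w|^2$, and a tail controlled by the limit $\binom{n+a-1}{n}/a_{m_0,n}\to (m_0+a)!/\big((m_0+1)!(a-1)!\big)$. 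You instead prove the single closed-form identity $c_n^{(v)}/\binom{n+a-1}{n}=\frac{(\dep_v+1)!(a-1)!}{(\dep_v+a)!}\prod_{j=1}^{a-1}\bigl(1+\frac{\dep_v+1}{n+j}\bigr)$ (which is correct: both sides equal $\prod_{j=2}^{a}\frac{(\dep_v+n+j)(j-1)}{(\dep_v+j)(n+j-1)}$), deduce a uniform two-sided pinch $c\le c_n^{(v)}/\binom{n+a-1}{n}\le 1$, and read off both conditions at once: \eqref{g-rate} with the explicit uniform bound $1/c$, and \eqref{assumption} from the resulting equality of the scalar spaces $\mathcal H_{k_v}=\mathcal H_{k_0}$ with equivalent norms (equivalently, the paper's bounded moment ratios). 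Your route buys a cleaner, quantitative statement -- a constant bound in \eqref{g-rate} rather than one extracted from a limit, and a conceptual explanation (norm-equivalence of the diagonal blocks) of why \eqref{assumption} holds -- and it isolates exactly where finite branching index enters, namely in keeping the limiting constants $\frac{(\dep_v+1)!(a-1)!}{(\dep_v+a)!}$ bounded away from $0$; the paper's head-and-tail argument is marginally more elementary but obscures this point.
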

\begin{proof}
Suppose that $\mathscr T$ has finite branching index.
Note that
\beqn
 \sup_{v \in V}\sum_{u \in \child{v}} \lambda^2_{u, a} \overset{\eqref{wts}}= \sup_{v \in V}\frac{\dep_v+1}{\dep_v + a} =1,
 \eeqn
 and hence by \cite[Proposition 3.1.8]{Jablonski}, $\mathscr B_{a}$ is a contraction. 
 Hence, in view of Theorem \ref{commutant} and the discussion prior to the statement of Proposition \ref{comm-B}, it suffices to check that the reproducing kernel $\kappa_{\mathscr H_a}$, as given by \eqref{Berg-k}, satisfies \eqref{assumption} and \eqref{g-rate}. 
 To see \eqref{assumption}, let $f(z) = \sum_{n=0}^{\infty} a_n z^n \in \mathscr H$, where $\{a_n\}_{n \in \mathbb N} \subseteq E.$ Let $\{g_1, \ldots, g_{\dim E}\}$ be an orthonormal basis of $E$ such that 
 for any $i=1, \ldots, \dim E,$ \beq \label{choice} g_i \in l^2(\childn{k_i}{\mathsf{root}})~ \mbox{for some} ~k_i \in \mathbb N. \eeq
 It is easy to see using \eqref{moment} that $\inp{z^ng_j}{z^ng_i}=0$ if $i \neq j$.
Also, since $\{z^nE\}_{n \in \mathbb N}$ are mutually orthogonal (\cite[Lemma 5.2.7]{CPT}), we have
\beq
\label{member}
\|f\|^2 &=& \sum_{n=0}^{\infty}\|z^n a_n\|^2=\sum_{n=0}^{\infty}\|\sum_{j=1}^{\dim E} \inp{a_n}{g_j}_E z^ng_j\|^2 \notag \\ &=& \sum_{j=1}^{\dim E} \sum_{n=0}^{\infty}|\inp{a_n}{g_j}_{_E}|^2\|z^ng_j\|^2.
\eeq 
Note further that
 \beqn
\inp{f(w)}{g_j}_{_E} g_k =\sum_{n=0}^{\infty} \inp{a_n}{g_j}_{_E}w^ng_k, \quad w \in \mathbb D,
 \eeqn
and hence by the mutual orthogonality of $\{z^nE\}_{n \in \mathbb N}$,
we obtain
\beqn
\|\inp{f(\cdot)}{g_j}_{_E} g_k\|^2 =\sum_{n=0}^{\infty} |\inp{a_n}{g_j}_{_E}|^2\|z^ng_k\|^2 = \sum_{n=0}^{\infty} |\inp{a_n}{g_j}_{_E}|^2\|z^ng_j\|^2 \frac{\|z^ng_k\|^2}{\|z^ng_j\|^2}.
\eeqn
To complete the verification of \eqref{assumption}, in view of \eqref{member}, it suffices to check that the sequence $\{{\|z^ng_k\|^2}/{\|z^ng_j\|^2}\}_{n \in \mathbb N}$ is bounded. Indeed, this sequence is convergent in view of \eqref{moment} and \eqref{choice}.

 To see \eqref{g-rate}, fix $w \in \mathbb D,$ and note that $\kappa_{\mathscr H_a}(w, w)$ is a positive diagonal operator with respect to the orthonormal bases of $[e_{\mathsf{root}}]$ and $l^2(\child{v}) \ominus [\Gamma_v],$ $v \in V_{\prec}.$ Moreover, the  diagonal entries of $\kappa_{\mathscr H_a}(w, w)$ are given by
 \beqn
 \sum_{n=0}^{\infty}{n+a-1 \choose n}~ 
|w|^{2n}, \quad \sum_{n=0}^{\infty}  
\frac{(\dep_v + n + a)! (\dep_v +1)!}{(\dep_v +a)! (\dep_v + n+1)!}\,|w|^{2n}, \quad v \in V_{\prec}.
 \eeqn
Consider the bi-sequence $\{a_{m, n}\}_{m, n \in \mathbb N}$ given by
 \beq \label{biseq}
 a_{m, n} = \frac{(m + n + a)! (m +1)!}{(m +a)! (m + n+1)!}, \quad m, n \in \mathbb N.
 \eeq
Since $a \Ge 2,$  
$\{a_{m, n}\}_{m \in \mathbb N}$ is decreasing for every $n \in \mathbb N.$ Further, since \beqn    
{n+a-1 \choose n} \Ge a_{0, n},  \quad n \in \mathbb N,\eeqn 
it follows that the minimum $\mu_{\min}(w)$ and maximum $\mu_{\max}(w)$ of eigenvalues of $\kappa_{\mathscr H_a}(w, w)$ are given respectively by
\beqn
\mu_{\min}(w) &=& \sum_{m =0}^{\infty} a_{m_0, n} |w|^{2n}, \quad m_0 := \max\{\dep_v : v \in V_{\prec}\}, \\
\mu_{\max}(w) &=& \sum_{n=0}^{\infty}{n+a-1 \choose n}|w|^{2n},
\eeqn
where $m_0$ is finite since $\mathscr T$ has finite branching index.
Thus \eqref{g-rate} is equivalent to   
\beq
\label{max-min-finite}
\sup_{w \in \mathbb D} \frac{\mu_{\max}(w)}{\mu_{\min}(w)} < \infty.
\eeq
To see \eqref{max-min-finite},
note that $$\lim_{n \rar \infty} \frac{{n+a-1 \choose n}}{a_{m_0, n}}=
\frac{(m_0+a)!}{(m_0+1)!(a-1)!}.$$
It follows now from \eqref{biseq} that there exists a positive integer $n_0$ such that $${n+a-1 \choose n}  \Le (m_0+a)!\,{a_{m_0, n}}, \quad n \Ge n_0.$$ Consequently,
\beqn
\frac{\mu_{\max}(w)}{\mu_{\min}(w)} &=&  \frac{ \sum_{n =0}^{\infty}  {n+a-1 \choose n} |w|^{2n}}{\sum_{n=0}^{\infty} a_{m_0, n} |w|^{2n}} \\ & \Le & \frac{\sum_{n=0}^{n_0} {n+a-1 \choose n} |w|^{2n}}{\sum_{n=0}^{\infty}a_{m_0, n} |w|^{2n}} + \frac{\sum_{n =n_0+1}^{\infty} (m_0+a)!~ a_{m_0, n} |w|^{2n}}{\sum_{n=0}^{\infty}a_{m_0, n}|w|^{2n}} \\ & \Le &  \sum_{n =0}^{n_0} {n+a-1 \choose n} |w|^{2n} + (m_0+a)!, 
\eeqn
and hence we obtain the conclusion in \eqref{max-min-finite}.
\end{proof}
\begin{remark}
An examination of the proof shows that there exists a real polynomial $p$ such that
\beq
\label{polynomial-g}
\displaystyle \|\kappa_{\mathscr H_a}(w, w)\|_{_{B(E)}} \|\kappa_{\mathscr H_a}(w, w)^{-1}\|_{_{B(E)}} \Le p(|w|^2), \quad w \in \mathbb D.
\eeq
\end{remark}

The following corollary is immediate from Corollary \ref{comm-abelian}, Proposition \ref{comm-B} and \cite[Proposition 3.5.1]{Jablonski}.
\begin{corollary} Let $\mathscr T=(V, \mathcal E)$ be a leafless, locally finite rooted directed tree and let $\mathscr B_a$ be the Bergman shift on $\mathscr T$. If $\mathscr T$ has finite branching index then the commutant $\{\mathscr B_a\}'$ of $\mathscr B_a$ is abelian if and only if $\mathscr T$ is graph isomorphic to the rooted directed tree without any branching vertex.
\end{corollary}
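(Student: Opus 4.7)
The plan is to chain the three cited results together. First I would invoke Proposition \ref{comm-B} to transfer the problem from $\mathscr B_a$ to the multiplication tuple $\mathscr M_{z,a}$ on the functional Hilbert space $(\mathscr H_a, \kappa_{\mathscr H_a}, \mathbb D, E)$, where $E = \ker(\mathscr B_a^*)$; since $\mathscr T$ is locally finite with finite branching index, $E$ is finite dimensional (as recalled from \cite[Proposition 2.1]{CT}). That proposition already verifies that the hypotheses of Theorem \ref{commutant} (conditions \eqref{assumption} and \eqref{g-rate}) are met and establishes the isometric isomorphism $\{\mathscr B_a\}' \cong H^{\infty}_{B(E)}(\mathbb D)$.

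Next I would apply Corollary \ref{comm-abelian}, whose equivalence $(i)\Leftrightarrow(iii)$ precisely says that, under the hypotheses of Theorem \ref{commutant}, the commutant of $\mathscr M_z$ is abelian if and only if $\dim E = 1$. Since the isomorphism of the preceding step is in particular a unital algebra isomorphism, abelianness of $\{\mathscr B_a\}'$ is equivalent to $\dim \ker(\mathscr B_a^*) = 1$.

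Finally I would invoke \cite[Proposition 3.5.1]{Jablonski}, which computes $\dim \ker(S_\lambda^*)$ for a weighted shift on a rooted directed tree: the dimension equals $1$ precisely when the set $V_\prec$ of branching vertices is empty, i.e., when $\mathscr T$ is graph isomorphic to the rooted directed tree without any branching vertex. Combining these three equivalences yields the corollary.

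There is essentially no obstacle here beyond bookkeeping: the three ingredients have already been assembled (Proposition \ref{comm-B} provides the commutant identification, Corollary \ref{comm-abelian} the characterization of abelianness, and Jablonski's proposition the translation to tree combinatorics), and all that remains is to verify that the isomorphism from Proposition \ref{comm-B} is indeed an algebra isomorphism, which is immediate from its construction via $\mathscr F(\mathscr M_\Phi) = \Phi$ in the proof of Theorem \ref{commutant}.
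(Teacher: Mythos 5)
Your proposal is correct and follows exactly the route the paper intends: the corollary is stated there as an immediate consequence of Proposition \ref{comm-B}, Corollary \ref{comm-abelian}, and \cite[Proposition 3.5.1]{Jablonski}, which is precisely the chain you assemble. The only detail you add — checking that the isometric isomorphism $\mathscr F(\mathscr M_\Phi)=\Phi$ is a unital algebra isomorphism — is a harmless and correct piece of bookkeeping.
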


It would be of independent interest to characterize $B(E)$-valued reproducing kernels $\kappa$ which satisfy \eqref{polynomial-g} for a polynomial $p$. In the context of Bergman shifts on $\mathscr T$, this problem seems to be closely related to the notion of finite branching index of $\mathscr T$.
One may further ask for a multivariable counter-part of Proposition \ref{comm-B}. We believe that similar arguments can be used to obtain a counter-part of Proposition \ref{comm-B} for multivariable analogs $S_{\lambdab_{\mf c_a}}$ of Bergman shifts $\mathscr B_a$ (refer to \cite[Chapter 5]{CPT} for the definition of $S_{\lambdab_{\mf c_a}}$). 
Further, it may be concluded from Corollary \ref{coro-list2} and \cite[Theorem 5.2.6 and Example 5.3.5]{CPT} that the $d$-tuple $S_{\lambdab_{\mf c_a}}$ is reflexive for any integer $a \Ge d.$ 
In order to avoid book-keeping, we skip these verifications. Here we discuss one family of weighted multishift to which Theorem \ref{reflexive} is applicable.  

The following can be seen as a $2$-variable counterpart of \cite[Theorem 10]{BDPP} (the reader is referred to \cite{CPT} for the definitions of directed Cartesian product of directed trees and associated multishifts).
\begin{proposition} \label{multi}
Let $\mathscr T = (V,\mathcal E)$ be the directed Cartesian
product of locally finite, rooted directed trees $\mathscr T_1,  \mathscr T_2$ and let $S_{\lambdab}=(S_1,  S_2)$ be a multishift on $\mathscr T$ consisting of left-invertible operators $S_1$ and $S_2$. Let $E$ be the joint kernel of $S^*_{\lambdab}$.
Assume that $S_{\lambdab}$ satisfies 
\beq \label{k-c-two} E \subseteq \ker(S_1^* S'^{\alpha_2}_2) \cap \ker(S_2^* S'^{\alpha_1}_1)~ \mbox{for all}~ (\alpha_1, \alpha_2) \in \mathbb{N}^2, \eeq
and that the $2$-tuple $(S'_1, S'_2)$ consists of commuting Cauchy dual operators of spectral radii at most $1$.
If $S_{\lambdab}$ is contractive, then it is reflexive.
\end{proposition}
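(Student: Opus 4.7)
The plan is to realize $S_{\lambdab}$ as a multiplication $2$-tuple on a functional Hilbert space $(\mathscr H, \kappa_{\mathscr H}, \mathbb D^2, E)$ and then invoke Theorem \ref{reflexive}. The polydisc $\mathbb D^2$ is an admissible domain (its closure is polynomially convex and it is star-shaped, as noted in the discussion preceding the definition of functional Hilbert space), so this is an acceptable choice of $\Omega$.

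First I would construct a two-variable Shimorin-type analytic model. Since $S_1,S_2$ are left-invertible with commuting Cauchy duals $S'_1,S'_2$, for $f \in \mathcal H$ define
\begin{equation*}
(Uf)(z) := \sum_{\alpha \in \mathbb N^2} z^{\alpha}\, P_E\, (S')^{*\alpha} f, \quad z=(z_1,z_2) \in \mathbb D^2,
\end{equation*}
where $(S')^{*\alpha} = (S'_1)^{*\alpha_1} (S'_2)^{*\alpha_2}$. Commutativity of $S'_1,S'_2$ makes the coefficients well-defined; the spectral radius hypothesis $r(S'_1), r(S'_2) \leq 1$ ensures that the series defines an $E$-valued holomorphic function on $\mathbb D^2$ (via a Hadamard-type radius estimate); and the kernel condition \eqref{k-c-two} guarantees the ``wandering'' property $P_E (S')^{*\alpha} S_j =$ (shifted form) that forces $U S_j = \mathscr M_{z_j} U$ for $j=1,2$. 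This is the two-variable analog of the construction recalled in the introduction and carried out for multishifts on directed product trees in \cite{CPT}, and also of the single-variable construction in \cite[Theorem 10]{BDPP} cited just above the statement.

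Next I would transfer the Hilbert space structure along $U$: equip the range $\mathscr H := U(\mathcal H)$ with the norm making $U$ a unitary. Standard arguments show that point evaluations on $\mathscr H$ are bounded (since they correspond on the model side to evaluating the coefficients $P_E (S')^{*\alpha} f$, each of which is a bounded linear functional of $f$), so $\mathscr H$ is a reproducing kernel Hilbert space associated with a $B(E)$-valued kernel on $\mathbb D^2$. By construction the monomials $\{z^{\alpha} g : \alpha \in \mathbb N^2,\ g \in E\}$ are total in $\mathscr H$, and $\mathscr H$ is $z$-invariant because it is invariant under each $\mathscr M_{z_j} = U S_j U^*$. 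Thus $(\mathscr H, \kappa_{\mathscr H}, \mathbb D^2, E)$ is a functional Hilbert space with multiplication $2$-tuple $\mathscr M_z$ unitarily equivalent to $S_{\lambdab}$.

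Finally, since $S_{\lambdab}$ is contractive, so is $\mathscr M_z$, and then the pair $(\mathbb D^2, \mathscr M_z)$ falls in List \ref{list} (Andô's inequality); in particular $\mathscr M_z$ satisfies von Neumann's inequality. Theorem \ref{reflexive} then yields reflexivity of $\mathscr M_z$, and hence of $S_{\lambdab}$. The principal obstacle is the first step: verifying that the formal series defining $U$ converges, yields a unitary intertwiner, and produces a kernel Hilbert space of \emph{holomorphic} $E$-valued functions on $\mathbb D^2$ (rather than merely formal power series). The hypotheses \eqref{k-c-two} and $r(S'_j) \leq 1$ are tailored precisely for this, and the bookkeeping mirrors what has been done in \cite{CPT,BDPP}; everything after that step is automatic from the results already established in Sections 2 and 4.
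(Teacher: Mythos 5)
Your proposal is correct and follows essentially the same route as the paper: the paper simply cites \cite[Theorem 4.2.4]{CPT} for the analytic model on the bidisc (the construction you sketch by hand, where analyticity and the generating property of $E$ are supplied by the directed-tree structure), and then concludes via List \ref{list} (And\^o), Lemma \ref{dilation-von}, and Theorem \ref{reflexive}, exactly as you do.
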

\begin{proof}
By \cite[Theorem 4.2.4]{CPT}, there exist a reproducing kernel Hilbert space $\mathscr H$ of $E$-valued holomorphic functions defined on the unit bidisc centered at the origin and a unitary $U : l^2(V) \rar \mathscr H$ such that $U S_j = \mathscr M_{z_j} U$ for $j=1, 2$. 
The desired conclusion now follows from Theorem \ref{reflexive}, List \ref{list}, and Lemma \ref{dilation-von}.
\end{proof}
\begin{remark} \label{rmk-multi}
In case any one of $\mathscr T_1$ and $\mathscr T_2$ is without any branching vertex, \eqref{k-c-two} always holds (see \cite[Corollary 4.2.9]{CPT}). 
\end{remark}
%

\subsection{A two-parameter family of tridiagonal $B(E)$-valued kernels}

We conclude the paper by exhibiting a two parameter family of a {\it tridiagonal} $B(E)$-valued kernels, which satisfy the assumptions of Theorems \ref{commutant} and \ref{reflexive}. 
Consider the rooted directed tree $\mathscr T=(V, \mathcal E)$ as discussed in \cite[Section 6.2]{Jablonski} (see also \cite[Example 3]{CT}). Recall that the set $V$ of vertices of $\mathscr T$ is given by
$$V:=\{(0,0)\}\cup\{(1,i), (2,i):i\Ge1\}$$ with
$\mathsf{root}=(0,0),$ and the edges are governed by $\mathsf{Chi}(0,0)=\{(1,1),(2,1)\}$ and 
\[\mathsf{Chi}(1,i)=\{(1,i+1)\},\
\mathsf{Chi}(2,i)=\{(2,i+1)\}, \quad i\Ge1. \] For positive numbers $\mathfrak s$ and $\mathfrak t$ with $\mf t \neq 1$, consider the weight system $\lambda_{\mathfrak s, \mathfrak t}$  given by  
\begin{align} \label{wts-1}
 \left.
  \begin{minipage}{36ex}
\beqn 
\lambda_{(1, 1)} &=  \mathfrak s =& \lambda_{(2, 1)}, \\
\lambda_{(1, 2)} &= 1  =& \lambda_{(2, 3)}, \\
  \lambda_{(2, 2)} &= \mathfrak t  = & \lambda_{(1, 3)}, \\
  \lambda_{(j, i)} &  = 1,   &  j  =1, 2, ~i  \Ge  4.
  \eeqn
 \end{minipage}
   \right\} 
\end{align}
Let $S_{\lambda_{\mathfrak s, \mathfrak t}}$ be the weighted shift with weight system $\lambda_{\mathfrak s, \mathfrak t}$ and let $E:= 
\ker(S^*_{\lambda_{\mathfrak s, \mathfrak t}})$. Then, as noted in \cite[Proposition 4.1]{CT}, $S_{\lambda_{\mathfrak s, \mathfrak t}}$ 
is unitarily equivalent to the multiplication operator $\mathscr M_z$ on the reproducing kernel Hilbert space $\mathscr H$ of $E$-valued holomorphic functions on the unit disc $\mathbb D$. The reproducing kernel $\kappa_\mathscr H : \mathbb D 
\times \mathbb D \rar B(E)$ of $\mathscr H$ is given by 
\beq \label{kernel}
\kappa_{\mathscr H}(z,w) &=& I_E + \alpha_0(x \otimes y\, z^2\overline{w} + y \otimes x\, z\overline{w}^2) \notag \\ &+&     \sum_{k=1}^{\infty} \Big({\alpha_k}\, x \otimes x + {\alpha_{k+1}}\, y \otimes y \Big) z^k \overline{w}^k, \quad
z, w \in \mathbb D,
\eeq
where $x = e_{(0, 0)}$, $y=\mf s(e_{(1, 1)}-e_{(2, 1)})$ are orthogonal basis vectors for $E$,   and 
\beqn \alpha_{k}:= \begin{cases}
\frac{1}{4\mf s^2}\left(1 - \mf t^{-2}\right)&~\mbox{if~}k=0,\\
\frac{1}{2\mf s^2}&~\mbox{if~}k=1,\\
\frac{1}{4\mf s^2}( 1 +\mf t^{-2})&~\mbox{if~}k=2,\\
\frac{1}{2\mf s^2 \mf t^{2}}&~\mbox{if~} k \Ge 3.\end{cases}
\eeqn 
Clearly, $\kappa_{\mathscr H}$ satisfies the normalization condition \eqref{nc}.
One may argue as in \cite[Example 4]{CT} to deduce that
\beq
\label{onb}
\mathcal B:=\{x\} \cup \left \{a_k z^{k}p(z)\right \}_{k \in \mathbb N} \cup  \left 
\{b_kz^{k}q(z)\right \}_{k \in \mathbb N}
\eeq
forms an orthonormal basis of $\mathscr H$,  
where 
\beq \label{ak-bk} \mbox{$a_0=a_1=1$, $a_k=\frac{1}{\mf t}$, $k \Ge 2$, \quad $b_0=1$,  $b_k=\frac{1}{\mf t}$, $k \Ge 1$},
\eeq
\beq \label{p-q}
\mbox{$p(z) = \frac{1}{2\mf s}( x z + y),$ \quad $q(z) = \frac{1}{2\mf s}(xz -y)$.}
\eeq


\begin{lemma} \label{above}
The $B(E)$-valued kernel $\kappa_{\mathscr H}$, as given by \eqref{kernel},
satisfies the conditions \eqref{assumption} and \eqref{g-rate} of Theorem \ref{commutant}.
\end{lemma}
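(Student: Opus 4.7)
The plan hinges on the orthonormal basis $\mathcal B$ of $\mathscr H$ displayed in \eqref{onb}, together with the identities
\begin{equation*}
 z^k p(z) + z^k q(z) = \tfrac{1}{\mf s}\, xz^{k+1}, \qquad z^k p(z) - z^k q(z) = \tfrac{1}{\mf s}\, y z^k, \qquad k\Ge 0,
\end{equation*}
which are immediate from \eqref{p-q} and let me pass freely between a $\mathcal B$-expansion of $f \in \mathscr H$ and its $E$-valued Taylor expansion in $z$.

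For \eqref{assumption}, the plan is to write $f \in \mathscr H$ as $f = c_0\, x + \sum_{k\Ge 0} c^{(1)}_k (a_k z^k p) + \sum_{k\Ge 0} c^{(2)}_k (b_k z^k q)$, with $|c_0|^2 + \sum_k(|c^{(1)}_k|^2 + |c^{(2)}_k|^2) = \|f\|_{\mathscr H}^2$, substitute the above identities to read off the scalar power series $\langle f(z), g\rangle_E$ for $g \in \{x,y\}$, multiply by $h \in \{x,y\}$, and expand $x z^n$ and $y z^n$ back in the basis $\mathcal B$ via the same two identities in the opposite direction. The resulting $\mathcal B$-coefficients of $\langle f(\cdot), g\rangle_E\, h$ are linear combinations of the $c^{(i)}_k$ with bounded multipliers, since by \eqref{ak-bk} the sequences $(a_k),(b_k),(1/a_k),(1/b_k)$ take values only in $\{1,1/\mf t\}$. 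Square summability is thereby transferred and hence $\langle f(\cdot),g\rangle_E\, h \in \mathscr H$; the four cases $g,h \in \{x,y\}$ then combine by bilinearity to yield \eqref{assumption}.

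For \eqref{g-rate}, the plan is to compute the matrix of $\kappa_{\mathscr H}(w,w)$ in the orthonormal basis $\{x,\tilde y\}$ of $E$, where $\tilde y := y/(\mf s\sqrt 2)$. Setting $r := |w|^2$, a direct evaluation of \eqref{kernel} (using $(x\otimes y)^* = y\otimes x$ and summing the geometric tails in $(\alpha_k)_{k\Ge 3}$) yields
\begin{equation*}
\kappa_{\mathscr H}(w,w) = \begin{pmatrix} A(r) & b(w) \\ \overline{b(w)} & C(r) \end{pmatrix}, \qquad b(w) := \sqrt 2\, \mf s\,\alpha_0 \,r\, w,
\end{equation*}
where $A(r)$ and $C(r)$ are positive rational functions whose only singularity on $[0,1]$ is a simple pole at $r=1$. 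In particular, $\tilde A(r) := (1-r)A(r)$ and $\tilde C(r) := (1-r)C(r)$ extend continuously to $[0,1]$ with $\tilde A(1), \tilde C(1) > 0$. The elementary bounds $\lambda_+ \Le A+C$ and $\lambda_+\lambda_- = AC - |b|^2$ on the eigenvalues of the above positive $2\times 2$ matrix give
\begin{equation*}
\|\kappa_{\mathscr H}(w,w)\|\,\|\kappa_{\mathscr H}(w,w)^{-1}\| \;\Le\; \frac{(A(r)+C(r))^2}{A(r)C(r) - |b(w)|^2} \;=\; \frac{(\tilde A(r)+\tilde C(r))^2}{\tilde A(r)\tilde C(r) - (1-r)^2|b(w)|^2}
\end{equation*}
after multiplying numerator and denominator by $(1-r)^2$. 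The denominator on the right equals $(1-r)^2\det\kappa_{\mathscr H}(w,w)$, which is positive on $\mathbb D$ by Proposition \ref{eigen}(v) and converges to $\tilde A(1)\tilde C(1) > 0$ as $r\to 1^-$ since $|b(w)|$ remains bounded. The whole ratio therefore extends continuously and positively to $[0,1]$, hence is uniformly bounded there, establishing \eqref{g-rate}.

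Neither step is conceptually deep; the principal obstacle is book-keeping — for \eqref{assumption} one must carefully track Fourier coefficients across two bases of $\mathscr H$, and for \eqref{g-rate} one must identify the leading singular behaviour of $A$ and $C$ at $r=1$ to ensure that the two eigenvalues of the $2\times 2$ matrix blow up at the same rate, so that the condition number stays uniformly bounded on all of $\mathbb D$.
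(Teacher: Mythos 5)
Your proposal is correct and follows essentially the same route as the paper: condition \eqref{assumption} is verified by transferring square-summable coefficients through the orthonormal basis \eqref{onb} (the paper does this via the explicit formula for $\langle f(\cdot),x\rangle_E\, y$ together with the uniform bound on $\|z^n y\|^2_{\mathscr H}$), and condition \eqref{g-rate} is verified by controlling the condition number of the same $2\times 2$ matrix of $\kappa_{\mathscr H}(w,w)$ in the basis $\{x, y/\|y\|\}$. The only difference is technical rather than conceptual: you bound $\lambda_+/\lambda_-$ through the trace and determinant and renormalize by $(1-r)$ to see the boundary behaviour, whereas the paper writes out the eigenvalues explicitly and bounds their ratio by $\max\{(k_1+a)/(k_2-a),\,(k_2+a)/(k_1-a)\}$; both give uniform boundedness on $\mathbb D$.
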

\begin{proof}
 Let $f \in \mathscr H$. Since $\mathcal B$, as given by \eqref{onb}, forms an orthonormal basis for $\mathscr H,$  
there exist $c, c_k, d_k \in \mathbb C$, $k \Ge 0$, such that
\beqn
f(z) = c x +  \sum_{k=0}^\infty c_k a_k {z^k p(z)} + \sum_{k=0}^\infty d_k b_k {z^k q(z)}, \quad z \in \mathbb D.
\eeqn
It follows that
$\|f\|^2_\mathscr H = |c|^2 + \sum_{k=0}^\infty |c_k|^2 + \sum_{k=0}^\infty |d_k|^2.$
That is, $\{c_k\}_{k \Ge 0}$, $\{d_k\}_{k \Ge 0}$ are in $l^2(\mathbb N)$. For $g, h \in E$, define
$f_{g,h}(w) = \inp{f(w)}{g}_E h, \quad w \in \mathbb D.$
Since $\{x,y\}$ is an orthogonal basis of $E$, in order to show that $f_{g,h} \in \mathscr H$, it suffices to check that $f_{x,y}, ~f_{y,x}, ~f_{x,x}, ~f_{y,y} \in \mathscr H$.  
It is easy to see using \eqref{p-q}  that
\beqn
f_{x,y}(w) 
= c y + \frac{1}{2\mf s} \sum_{k=0}^\infty (a_k {c_k} +b_k d_k) w^{k+1}y, \quad w \in \mathbb D.
\eeqn
Since $\{z^ny\}_{n \in \mathbb N}$ is orthogonal and for $n \in \mathbb N,$ \beqn \|z^ny\|^2_{\mathscr H} &=& \mf s^2 \|S^n_{\lambda_{\mathfrak s, \mathfrak t}}(e_{(1, 1)}-e_{(2, 1)})\|^2_{l^2(V)} \\ &=& \mf s^2 \big (\|S^n_{\lambda_{\mathfrak s, \mathfrak t}}e_{(1, 1)}\|^2_{l^2(V)}+ \|S^n_{\lambda_{\mathfrak s, \mathfrak t}}e_{(2, 1)})\|^2_{l^2(V)} \big)\\
& \Le & \max \{2\mf s^2, 2\mf s^2 \mf t^2, \mf s^2(1+\mf t^2)\},\eeqn it follows from $\{c_k\}_{k \in \mathbb N}$, $\{d_k\}_{k \in \mathbb N} \in l^2(\mathbb N)$ and \eqref{ak-bk} that 
$f_{x, y} \in \mathscr H.$
Along the similar lines, one can check that $f_{y,x}, ~f_{x,x}, ~f_{y,y} \in \mathscr H$.  
This yields \eqref{assumption}.

To see \eqref{g-rate}, note that $\|y\|=\mf s\sqrt{2}.$ Thus, for $w \in \mathbb D$, by \eqref{kernel}, we have
\beq \label{k1}
\kappa_{\mathscr H}(w,w) x &=& k_1(w,w) x + \alpha_0\, {\mf s \sqrt{2}} ~w\overline{w}^2 \frac{y}{\|y\|} \notag \\
 k_1(w,w) &=& 1+\sum_{k=1}^{\infty} \alpha_k |w|^{2k}.\eeq 
 Similarly, for $w \in \mathbb D,$ we have
\beq \label{k2}
\kappa_{\mathscr H}(w,w) \frac{y}{\|y\|} &=& \alpha_0\, {\mf s \sqrt{2}} ~ w^2\overline{w} ~x + k_2(w,w) \frac{y}{\|y\|},\notag \\
 k_2(w,w) &=& 1+ \sum_{k=1}^{\infty} \alpha_{k+1} 2 \mf s^2 |w|^{2k}.\eeq 
The matrix representation, say $A(w)$, of the positive operator $\kappa_{\mathscr 
H}(w,w)$ with respect to the basis $\{x, \frac{y}{\|y\|}\}$ is given by
\beqn
A(w) = \left[\begin{array}{cc}
k_1(w,w) & a w^2\overline{w}\\
a w\overline{w}^2 & k_2(w,w)
\end{array}\right],
\eeqn
where $a:=\alpha_0\, {\mf s \sqrt{2}}.$
The eigenvalues $x_+(w)$ and $x_-(w)$ of $A(w)$ are given by
\beqn
x_{\pm}(w) = \frac{1}{2}\left(k_1(w,w) + k_2(w,w) \pm \sqrt{\big(k_1(w,w)-k_2(w,w)\big)^2 + 4a^2 |w|^6}\right).
\eeqn
Clearly, $x_+(w) \Ge x_-(w)$ for all $w \in \mathbb D$. It follows that for any $w \in \mathbb D$,
\beqn
\frac{x_+(w)}{x_-(w)} &=& \frac{k_1(w,w) + k_2(w,w) + \sqrt{\big(k_1(w,w)-k_2(w,w)\big)^2 + 4a^2 |w|^6}}{k_1(w,w) + 
k_2(w,w) - \sqrt{\big(k_1(w,w)-k_2(w,w)\big)^2 + 4a^2 |w|^6}}\\
&\Le& \frac{k_1(w,w) + k_2(w,w) + |k_1(w,w)-k_2(w,w)| + 2a |w|^3}{k_1(w,w) + 
k_2(w,w) - |k_1(w,w) - k_2(w,w)| - 2a|w|^3}\\
&\Le& \max \left\{ \frac{ k_1(w,w) + a}{ k_2(w,w) - a}, ~\frac{ k_2(w,w) + a}{ k_1(w,w) - a}\right\},
\eeqn
which, in view of \eqref{k1} and \eqref{k2}, is easily seen to be of polynomial order as a function of $|w|^2.$
This completes the verification of \eqref{g-rate}.
\end{proof}
Assume that $\mf s \in (0, 1/\sqrt{2}]$ and $\mf t \in (0, 1).$ By \cite[Proposition 3.1.8]{Jablonski}, $S_{\lambda_{\mathfrak s, \mathfrak t}}$ is a contraction.
Combining Lemma \ref{above} with Theorem \ref{commutant}, we conclude that the commutant of $S_{\lambda_{\mathfrak s, \mathfrak t}}$ is isometrically isomorphic to $(H^{\infty}_{_{B(E)}}(\mathbb D), \|\cdot\|_{B(\mathscr H)})$, where $E=\ker(S^*_{\lambda_{\mathfrak s, \mathfrak t}})$ is the $2$-dimensional space spanned by $x$ and $y$. Further, by Theorem \ref{reflexive}, $S_{\lambda_{\mathfrak s, \mathfrak t}}$ is reflexive.
It is worth mentioning that $S_{\lambda_{\mathfrak s, \mathfrak t}}$ is never {\it hyponormal}, that is, $S^*_{\lambda_{\mathfrak s, \mathfrak t}}S_{\lambda_{\mathfrak s, \mathfrak t}}-S_{\lambda_{\mathfrak s, \mathfrak t}}S^*_{\lambda_{\mathfrak s, \mathfrak t}} \ngeqslant 0$  
(see \cite[Theorem 5.1.2]{Jablonski}).


\medskip \textit{Acknowledgment}. \
A part of this paper was written while the second author visited the Department of Mathematics and Statistics, IIT Kanpur. He expresses his gratitude to the faculty and the administration of this unit for their warm hospitality. The authors appreciate the suggestions of Md. Ramiz Reza and Deepak Kumar Pradhan pertaining to the definition of the functional Hilbert space improving the earlier presentation.
The authors would like to thank Jan Stochel for his continual support and  encouragement. Finally, the authors are grateful to the anonymous referee for several important remarks (see Remark \ref{Hinfty} and Remark \ref{rmk-reflexive}(3)). In particular, it has been pointed out by the referee that Theorem \ref{commutant} can be recovered from its scalar-valued counter-part (the case of $\dim\, E=1$).

\end{document}